\newtheorem{theorem}{Theorem}[section]
\newtheorem{lemma}[theorem]{Lemma}
\newtheorem{proposition}[theorem]{Proposition}
\newtheorem{corollary}[theorem]{Corollary}
\newtheorem{definition}[theorem]{Definition}
\newtheorem{remark}[theorem]{Remark}
\newcommand{\R}{\mathbb{R}}
\newcommand{\C}{\mathbb{C}}
\newcommand{\T}{\mathcal{T}}
\newcommand{\B}{\mathcal{B}}
\newcommand{\N}{\mathbb{N}}
\newcommand{\M}{\mathbb{M}}
\newcommand{\A}{\mathcal{A}}
\DeclareMathOperator{\lead}{lead}
\DeclareMathOperator{\co}{co}
\DeclareMathOperator{\ext}{ext}
\title{Quantization of $A_{0}(K)$-Spaces}
\author{Anindya Ghatak and Anil Kumar Karn}
\address[]{School of Mathematical Sciences, National Institute of Science Education and Research - Bhubaneswar, P.O.: Bhimpur-Padanpur,
Via- Jatni, Dist- Khurda, Odisha-752050, India}
\date{}
\keywords{Matrix ordered space, matrix order unit space, C$^*$-ordered operator space,  matrix convex set, $L^{1}$-matrix convex set, $L^{1}$-regular embedding}
\subjclass[msc2010]{Primary 46B40; Secondary 46L07, 47L25}
\begin{document}
\vspace{3in}
\vspace{3in}
\maketitle 
\begin{abstract}
	In this paper, we study $L^1$-matrix convex sets $\{K_{n}\}$ in $*$-locally convex spaces and show that every C$^*$-ordered operator space is complete isometrically, completely isomorphic to $\{A_{0}(K_{n}, M_{n}(V))\}$ for a suitable $L^1$-matrix convex set $\{K_{n}\}$.  Further, we generalize the notion of regular embedding of a compact convex set to $L^{1}$-regular embedding of $L^{1}$-matrix convex set. Using $L^{1}$-regular embedding of $L^{1}$-convex set, we find conditions under which $A_{0}(K_{n}, M_{n}(V))$ is an abstract operator system.  
\end{abstract}
%% maketitle must follow the abstract.
\maketitle 

\section{Introduction}
Kadison's realization of the self-adjoint part of an unital C$^{\ast}$-algebra $\A$ as the space of continuous real valued affine functions on the state space of $\A$ is one of the early corner-stone in the order-theoretic Functional Analysis \cite{KR51}. In this seminal paper of 1951, he observed that the same result holds for the self-adjoint part of any unital self-adjoint subspace of $\A$ (that is, an operator system in $\A$). In particular, he showed that the self-adjoint part of an operator system is an order unit space. Let $K$ be a compact and convex set in a locally convex space $X$ and let $A(K)$ denote the space of all real valued continuous affine functions on $K$. Then $A(K)$ is an order unit space. In 1968, Asimov introduced the notion of an universal cap (say, $K$) of a cone in a real ordered vector space and studied $A_0(K)$ as a non-unital prototype of $A(K)$\cite{ASMV68}. (See also, \cite{NG68}.)  

A nice duality theory for ordered Banach spaces was laid down during 1950's and 60's in the works of Bonsall, Edwards, Ellis, Asimov and Ng and many others. (See \cite{JEM70} and \cite{WK73} and references therein.) However, the functional representation theorem of Kadison (and the work that followed) was limited to self-adjoint elements only. Subsequently, the order theoretic Functional Analysis was limited to only real scalars. After a long gap, in 1976, Effros observed the following relation between the norm of an arbitrary element of a C$^*$-algebra $\A$ and the order structure in $M_2(\A)$: 
$$\Vert a \Vert \le 1 ~ \textrm{if and only if} ~ \begin{bmatrix} 1  &  a \\ a^* & 1 \end{bmatrix}.$$
Following this, in 1977, Choi and Effros introduced matrix ordered spaces and proved a generalization of Kadison's order unit spaces. More precisely, they proved that every operator system is exactly a matrix order unit space (definition is given below). This theory is also known as a beginning of quantization of Functional Analysis. In this sense, the Choi-Effros realization of an operator system as a matrix order unit space is a quantization of order unit space. A quantization of $A(K)$ appeared in the work of Webster and Winkler \cite{CS99} where they proved an operator space version of the Krein-Milman theorem. 

In 2001, the second author introduced the notion of C$^*$-ordered operator space. It was proved that an (abstract) C$^{*}$-ordered operator space is precisely an (abstract) $*$-operator space which can be ``order embedded'' in a C$^*$-algebra \cite{KARN11}. In this paper, we prove that a `quantized' functional representation of C$^*$-ordered operator spaces. Webster and Winkler's quantized functional representation of abstract operator systems relies on matrix convex sets. However, we could not construct a `matricial version of universal cap in this context of matrix convex sets which is needed to study non-unital case. To overcome this problem, we consider matrix (Choi-Effros) duality and introduce the notion of $L^{1}$-matrix convex sets. We prove that if $V$ is a C$^*$-ordered operator space and $Q_{n}=\{f\in M_{n}(V^*)^+: \Vert f\Vert\leq 1\}$ (in the matrix duality), then $\{Q_{n}(V)\}$ is an $L^1$-matrix convex set, and $V$ is complete isometric, completely order isomorphic to $\{A_{0}(Q_{n}(V), M_{n}(V^*))\}$. Conversely, we show that if $\{K_{n}\}$ is an $L^{1}$-matrix convex set in a $*$-locally convex space $V$, then $\{A_{0}(K_n, M_{n}(V))\}$ in a C$^*$-ordered operator space. Further we study additional properties of an $L^1$-matrix convex set $\{K_{n}\}$ in order to relate it to abstract operator system. 

\section{C$^*$-ordered Operator Spaces}
If $V$ is a complex $*$-vector space, we denote $V_{sa}$ to be the set of \emph{self-adjoint} elements of $V.$ A set $V^+\subseteq V_{sa}$ is \emph{cone} in $V$ if $V^+$ is additive and positive homogeneous (that is, $\lambda v\in V^{+}$ whenever $\lambda \in \R, v \in V^{+}$). In this case, we say that $(V, V^+)$ is complex ordered vector space. We write $u\leq v,$ or equivalently $v\geq u$ if and only if $v-u\in V^{+}.$ We say that $V^{+}$ is \emph{proper} if $V^{+}\cap -V^{+}=\{0\}$ and generating if $V^{+}$ spans whole $V.$ An element $e\in V^{+}$ is an \emph{order unit} for $V$ if for each $v\in V,$ there is a $t>0$ such that $-te\leq v\leq te.$ The cone $V^{+}$ is an \emph{Archimedean}, if for each $v_{0}\in V^{+}$ with $-t v_{0}\leq v$ for all $t>0$ implies $v\in V^{+}.$ If $V$ has an order unit $e,$ it is sufficient to consider $v_{0}=e.$ Let $(V_{i}, V_{i}^{+})$ be the complex ordered vector space for $i=1,2$  and let $\phi: V_{1}\mapsto V_{2}$ be a self-adjoint linear map. We say that $\phi$ is \emph{positive} if $\phi(V_{1}^{+})\subseteq V_{2}^{+}$. Moreover $\phi$ is called an \emph{order isomorphism} if $\phi$ is an isomorphism and $\phi, \phi^{-1}$ are positive.

We know that if $V$ is a $*$-vector space, then $M_{n}(V)$ is also $*$-vector space with $[v_{i,j}]^{*}=[v_{j,i}^{*}].$ A complex $*$-vector space $V$ is called a \emph{matrix ordered} if $M_{n}(V)^+\subseteq M_{n}(V)_{sa}$ is a cone for each $n$ such that $\gamma^{*} M_{m}(V)^+ \gamma \subseteq M_{n}(V)^+$ whenever $\gamma\in\M_{m,n}.$ A matrix ordered  space $(V, \{M_{n}(V)^+\})$ with an order unit $e$ is called a \emph{matrix order unit space} if $V^+$ has an proper and $M_{n}(V)^{+}$ is Archimedean for each $n$ \cite[Choi, Effros]{CE77}. It may be noted that $V$ is matrix ordered then its matrix dual $V^*$ is also matrix ordered where $M_{n}(V^*)^+=\{f\in M_{n}(V^{*})_{sa}: f(v)\geq 0 ~~ \forall v\in M_{n}(V)^{+}\}.$ 
\begin{theorem}\cite{CE77} 
	Let $(V,\{M_{n}(V)^+\},e)$ be an abstract operator system. Then there is a Hilbert space $H$ and a concrete operator system $\mathcal{S}\subseteq \B(H)$ and a complete order isomorphism $\phi: V\mapsto \mathcal{S}$ such that $\phi(e)=I,$ where $I$ is the identity operator on $H$.
\end{theorem}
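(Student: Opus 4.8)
The plan is to reprove this along now-standard lines: realise $V$ inside some $\B(H)$ by amalgamating \emph{all} of its matrix states, in the spirit of the Gelfand--Naimark--Segal and Stinespring constructions. Before that can be set up one needs the matrix norms on $V$, so the first step is to note that each $M_{n}(V)$ is again an order unit space with order unit $e_{n}=\operatorname{diag}(e,\dots,e)$ (a routine consequence of the matrix-ordered axioms) and Archimedean by hypothesis, and to equip it with the order unit norm $\|v\|_{n}=\inf\{t>0:-te_{n}\le v\le te_{n}\}$ for self-adjoint $v$, extended to arbitrary $v\in M_{n}(V)$ by the $2\times2$ rule $\|v\|_{n}\le t\iff\left(\begin{smallmatrix}te_{n}&v\\ v^{*}&te_{n}\end{smallmatrix}\right)\in M_{2n}(V)^{+}$. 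The Archimedean hypothesis is exactly what makes these infima attained and $\|\cdot\|_{n}$ a genuine norm, and properness of $V^{+}$ makes $\|\cdot\|_{1}$ separating; since a unital completely positive map between such spaces is completely contractive, every unital completely positive $\varphi\colon V\to M_{k}$ satisfies $\|\varphi(v)\|\le\|v\|_{1}$.

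The core of the argument is to show that the unital completely positive maps into matrix algebras reconstruct the whole matrix order, i.e.\ that for each $m$,
\[
M_{m}(V)^{+}=\bigl\{\,v\in M_{m}(V)_{sa}\;:\;\varphi^{(m)}(v)\ge0\ \text{for every u.c.p. }\varphi\colon V\to M_{k},\ k\ge1\,\bigr\}.
\]
The inclusion ``$\subseteq$'' is just the definition of complete positivity. For ``$\supseteq$'' I would take $v\in M_{m}(V)_{sa}\setminus M_{m}(V)^{+}$; since $M_{m}(V)$ is itself an Archimedean order unit space, $M_{m}(V)^{+}$ is $\|\cdot\|_{m}$-closed, so Hahn--Banach separation furnishes a state $s$ on $M_{m}(V)$ with $s(v)<0$.

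The decisive move --- and the step I expect to be the main obstacle --- is to convert this state into a \emph{unital} completely positive map $V\to M_{m}$. Following Choi's reshaping, set $\Phi_{s}\colon V\to M_{m}$, $\Phi_{s}(a)_{ij}=m\,s(a\otimes E_{ij})$ (with $E_{ij}$ the matrix units); a direct computation gives, for $w\in M_{k}(V)$ and $\xi=(\xi_{ai})\in\C^{k}\otimes\C^{m}$, the identity $\langle\Phi_{s}^{(k)}(w)\xi,\xi\rangle=m\,s(\gamma^{*}w\gamma)$ with $\gamma=(\xi_{ai})\in\M_{k,m}$, so the \emph{rectangular} compatibility axioms $\gamma^{*}M_{k}(V)^{+}\gamma\subseteq M_{m}(V)^{+}$ force $\Phi_{s}$ to be not merely $m$-positive but completely positive --- this is precisely where the full strength of the matrix-ordered hypotheses is consumed, and getting it clean is the crux. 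In general $\Phi_{s}$ need not be unital, but replacing $s$ by $s'=(1-\varepsilon)s+\varepsilon s_{0}$ with $s_{0}\bigl((w_{ij})\bigr)=\tfrac1m\sum_{i}\rho(w_{ii})$ for some fixed state $\rho$ on $V$ keeps $s'(v)<0$ for small $\varepsilon$ while making $\Phi_{s'}(e)$ invertible; then $\varphi:=\Phi_{s'}(e)^{-1/2}\Phi_{s'}(\,\cdot\,)\Phi_{s'}(e)^{-1/2}$ is unital and completely positive, $\varphi^{(m)}(v)\ge0\iff\Phi_{s'}^{(m)}(v)\ge0$, and since $s'(v)$ is a compression of $\Phi_{s'}^{(m)}(v)$ the latter fails, giving ``$\supseteq$''.

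Finally I would assemble the pieces. Let $\Gamma$ be the (genuine) set of all unital completely positive maps $\varphi\colon V\to M_{k(\varphi)}$, $k(\varphi)\ge1$, put $H=\bigoplus_{\varphi\in\Gamma}\C^{k(\varphi)}$, and let $\phi=\bigoplus_{\varphi\in\Gamma}\varphi\colon V\to\B(H)$. By the contractivity from the first step each $\phi(v)$ is a bounded operator; $\phi$ is unital because every $\varphi(e)=I_{k(\varphi)}$, and completely positive because every $\varphi$ is; and the displayed equality says precisely that $\phi^{(m)}(v)\ge0$ forces $v\in M_{m}(V)^{+}$, so $\phi$ is a complete order isomorphism onto $\mathcal S:=\phi(V)\subseteq\B(H)$. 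Taking $m=1$ and using that $V^{+}$ is proper gives injectivity of $\phi$, and a unital complete order isomorphism onto a concrete operator system is automatically completely isometric (apply the $2\times2$ norm characterization on both sides). Hence $\phi$ is the desired embedding with $\phi(e)=I$.
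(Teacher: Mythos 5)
The paper gives no proof of this statement---it is quoted verbatim from Choi--Effros \cite{CE77}---so there is nothing in the text to compare against; your outline is a correct reconstruction of the standard argument from that source: order-unit matrix norms via the $2\times2$ rule, the correspondence between positive functionals $s$ on $M_m(V)$ and completely positive maps $\Phi_s\colon V\to M_m$ (where, as you note, the rectangular axiom $\gamma^{*}M_{k}(V)^{+}\gamma\subseteq M_{m}(V)^{+}$ is exactly what yields complete rather than merely $m$-fold positivity, via $\langle\Phi_{s}^{(k)}(w)\xi,\xi\rangle=m\,s(\gamma^{*}w\gamma)$), the perturbation $s'=(1-\varepsilon)s+\varepsilon s_{0}$ to make $\Phi_{s'}(e)$ invertible, and the direct sum over all unital completely positive maps into matrix algebras. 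The only steps you leave implicit are routine but worth recording: properness of every $M_{n}(V)^{+}$ (needed for $\Vert\cdot\Vert_{n}$ to be a norm, not only $\Vert\cdot\Vert_{1}$; it follows from properness of $V^{+}$ by polarization applied to $\gamma^{*}v\gamma$), and the normalization in the separation step, where one must check that the separating functional does not annihilate $e_{m}$---which it cannot, since $e_{m}$ is an order unit and the Archimedean property makes $M_{m}(V)^{+}$ norm-closed.
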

An \emph{$L^{\infty}$-matricially normed space} $(V, \{\Vert\cdot\Vert_{n}\})$ is a complex vector space $V$ together with sequence of norms $\{\Vert\cdot\Vert_{n}\}$ such that
\begin{enumerate}
	\item $(M_{n}(V),\Vert\cdot\Vert_{n})$ is a normed linear space for all $n;$
	\item $\Vert v\oplus w\Vert_{n+m}=\max\{\Vert v \Vert_{n},\Vert
	w\Vert_{m}\};$
	\item $\Vert\alpha v\beta\Vert_{n}\leq \Vert\alpha\Vert\Vert
	v\Vert_{n}\Vert\beta\Vert.$
\end{enumerate}
An $L^{\infty}$-matricially normed space is called  an \emph{abstract operator space}.
Every abstract operator space is completely isometric to some concrete operator space of $\B(H)$ for some Hilbert space $H$ \cite{R98}.

Next, an \emph{$L^{1}$-matricially normed space} $(V, \{\Vert\cdot\Vert_{n}\})$ is a complex vector space together with sequence of matrix norms $\{\Vert\cdot\Vert_{n}\}$ such that
\begin{enumerate}
	\item $(M_{n}(V),\Vert\cdot\Vert_{n})$ is a normed linear space for each $n;$
	\item $\Vert v\oplus w\Vert_{n+m}=\Vert v \Vert_{n} + \Vert
	w\Vert_{m};$
	\item $\Vert\alpha v\beta\Vert_{n}\leq \Vert\alpha\Vert\Vert
	v\Vert_{n}\Vert\beta\Vert.$
\end{enumerate}
We know from \cite[Theorem 5.1]{R98} that if $V$ is an $L^{\infty}$-matricially normed space, then its matricial dual $V^*$ is an $L^1$-matricially normed space with scalar pairing is given by
\begin{align}
\langle [v_{i,j}], [f_{i,j}]\rangle(=[f_{i,j}]([v_{i,j}]))=\sum_{i,j=1}^{n}f_{i,j}(v_{i,j}).
\end{align}\label{e4}
\begin{definition}[C$^{\ast}$-ordered operator space]\cite{KARN11} 
	A $*$-vector space $V$ together with a matrix norm $\{ \Vert\cdot\Vert_n \}$ and a matrix order $\{ M_n(V)^+ \}$ is said to be a C$^{\ast}$-ordered operator space if $(V, \{ \Vert\cdot\Vert_n \})$ is an abstract operator space, $V^+$ is proper and if for each $n\in \N,$ the following conditions hold:
	\begin{enumerate}
		\item $*$ is an isometry on $M_{n}(V);$ 
		\item $M_{n}(V)^+$ is closed; 
		\item  $\Vert f\Vert_{n}\leq \max\{\Vert g\Vert_{n},\Vert h\Vert_{n}\},$ whenever $f\leq g\leq h$ with $f,g,h\in M_{n}(V)_{sa}.$
	\end{enumerate}
\end{definition}
We know that every matrix order unit space is a C$^{\ast}$-ordered operator space. 
\begin{theorem}\cite{KARN11}
	Let $(V, \{M_{n}(V)^+\}, \{\Vert\cdot\Vert_{n}\})$ be a C$^{\ast}$-ordered operator space. Then there exist a completely order isomerty
	$\phi:V\mapsto \A$ for some C$^{\ast}$-algebra $\A.$
\end{theorem}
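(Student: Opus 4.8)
The plan is to reduce the statement to the Choi--Effros representation theorem recalled above by adjoining an order unit to $V$, and then to verify that the resulting map is not only completely positive but also completely isometric for the prescribed matrix norm.

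\emph{Step 1 (unitization).} First I would form the $*$-vector space $V^{\sharp}=V\oplus\C\hat e$ with $\hat e=\hat e^{*}$, and for each $n$ equip $M_{n}(V^{\sharp})$ with the matrix cone $M_{n}(V^{\sharp})^{+}$ generated --- under the operations $x\mapsto\gamma^{*}x\gamma$ (for $\gamma\in\M_{m,n}$), addition, and Archimedean (norm) closure --- by the two families $\{\,0\oplus u : u\in M_{n}(V)^{+}\,\}$ and $\{\,\lambda\,(I_{n}\otimes\hat e)+v : \lambda>0,\ v\in M_{n}(V)_{sa},\ \Vert v\Vert_{n}<\lambda\,\}$. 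I would then check that $(V^{\sharp},\{M_{n}(V^{\sharp})^{+}\},\hat e)$ is a matrix order unit space (an abstract operator system): the matricial compatibility axiom and the Archimedean property are built into the construction, $\hat e$ is manifestly a matrix order unit, and properness of $(V^{\sharp})^{+}$ reduces to properness of $V^{+}$ once one knows $M_{1}(V^{\sharp})^{+}\cap V=V^{+}$. The substance of this step is precisely that $V$ embeds in $V^{\sharp}$ without distortion: that $M_{n}(V^{\sharp})^{+}\cap M_{n}(V)=M_{n}(V)^{+}$ for all $n$, and that the order-unit norm of $V^{\sharp}$ restricted to $M_{n}(V)_{sa}$ equals $\Vert\cdot\Vert_{n}$ --- equivalently, that the positive functionals of norm at most one norm $M_{n}(V)_{sa}$. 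This is an order-theoretic Hahn--Banach statement in which conditions~(2) (closedness of the matricial cones) and~(3) (the order--norm compatibility) enter decisively, and I expect it to be the main obstacle.

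\emph{Step 2 (representation and order embedding).} Granting Step~1, the Choi--Effros theorem produces a Hilbert space $H$, a concrete operator system $\mathcal S\subseteq\B(H)$, and a unital complete order isomorphism $\psi\colon V^{\sharp}\to\mathcal S$. I would set $\A=\B(H)$ (or $C^{*}(\mathcal S)$, if a smaller C$^{*}$-algebra is wanted) and $\phi=\psi|_{V}\colon V\to\A$. Since $M_{n}(\mathcal S)^{+}=M_{n}(\mathcal S)\cap M_{n}(\A)^{+}$ for a concrete operator system, the identity $M_{n}(V^{\sharp})^{+}\cap M_{n}(V)=M_{n}(V)^{+}$ from Step~1 gives at once that $\phi$ is a complete order embedding: $[v_{ij}]\in M_{n}(V)^{+}$ if and only if $[\phi(v_{ij})]\in M_{n}(\A)^{+}$.

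\emph{Step 3 (complete isometry).} Finally I would show that $\phi$ is a complete isometry, i.e. that the operator-space norm $V$ inherits from $V^{\sharp}\subseteq\B(H)$ coincides with $\{\Vert\cdot\Vert_{n}\}$. A unital complete order isomorphism between operator systems is automatically completely isometric, so $\psi$, and hence $\phi$, is completely isometric for the norm of $V^{\sharp}$; it therefore suffices to compare, on each $M_{n}(V)$, the norm inherited from $V^{\sharp}$ with $\Vert\cdot\Vert_{n}$. On self-adjoint elements this was arranged in Step~1. For a general $v\in M_{n}(V)$ I would pass to the self-adjoint element $\left[\begin{smallmatrix}0&v\\ v^{*}&0\end{smallmatrix}\right]\in M_{2n}(V)_{sa}$ and use condition~(1) (that $*$ is an isometry on each $M_{k}(V)$) together with the $L^{\infty}$-matricial axioms to obtain $\bigl\Vert\left[\begin{smallmatrix}0&v\\ v^{*}&0\end{smallmatrix}\right]\bigr\Vert_{2n}=\Vert v\Vert_{n}$ --- the same identity that holds inside $V^{\sharp}\subseteq\B(H)$, in the spirit of Effros' observation recalled in the Introduction. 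Combined with the self-adjoint case this forces the two norms on $v$ to agree, and $\phi\colon V\to\A$ is then a completely order isometry, completing the proof. The one genuinely delicate point is the order-theoretic Hahn--Banach argument of Step~1, where conditions~(2)--(3) do the work, with condition~(1) entering in Step~3.
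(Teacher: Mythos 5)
The paper does not actually prove this statement: it is quoted verbatim from \cite{KARN11} as a known representation theorem, so there is no internal proof to compare yours against. Judged on its own terms, your outline follows the standard (and, as far as the strategy goes, the correct) route --- unitize, invoke Choi--Effros, restrict --- and your Step~3 is sound: the identity $\Vert v\Vert_n=\bigl\Vert\left[\begin{smallmatrix}0&v\\ v^{*}&0\end{smallmatrix}\right]\bigr\Vert_{2n}$ does follow from the isometry of $*$ and the $L^{\infty}$-axiom (indeed the paper uses exactly this identity in Section~2), and a unital complete order isomorphism of operator systems is completely isometric.

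The genuine gap is that Step~1, which you yourself flag as ``the main obstacle,'' is where the entire content of the theorem lives, and you only announce it. Concretely, three things are asserted without argument: (i) that the Archimedeanized cone $M_{n}(V^{\sharp})^{+}$ is proper (Archimedean closure can collapse an order-unit seminorm to a seminorm unless one produces enough positive functionals); (ii) that $M_{n}(V^{\sharp})^{+}\cap M_{n}(V)=M_{n}(V)^{+}$ --- the inclusion $\supseteq$ is trivial, but $\subseteq$ requires showing that any $w\in M_n(V)_{sa}$ with $w+\epsilon\,(I_n\otimes\hat e)$ in the generated cone for all $\epsilon>0$ decomposes as $w=u_{\epsilon}+v_{\epsilon}$ with $u_{\epsilon}\in M_{n}(V)^{+}$ and $\Vert v_{\epsilon}\Vert_{n}<\epsilon$, whence $w\in\overline{M_{n}(V)^{+}}=M_{n}(V)^{+}$ by condition~(2); and (iii) that the order-unit norm of $V^{\sharp}$ restricted to $M_{n}(V)_{sa}$ equals $\Vert\cdot\Vert_{n}$, which is precisely the Hahn--Banach/normality argument in which condition~(3) must be deployed (one must show both that $-\lambda\hat e\le v\le\lambda\hat e$ forces $\Vert v\Vert_{n}\le\lambda$ and, conversely, that $\Vert v\Vert_{n}\le\lambda$ forces $-\lambda\hat e\le v\le\lambda\hat e$ in the new order). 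Until (i)--(iii) are actually carried out, the proposal is a correct reduction but not a proof; this is exactly the work done in \cite{KARN11}, and a complete write-up should either reproduce that argument or cite it for these three points.
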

Let $V$ be a C$^{\ast}$-ordered operator space. Then its matricial dual $V^*$ is an $L^{1}$-matricially normed space with an involution such that $*$ is isometry on $M_{n}(V^*)$ and $(V^*, \{ M_n(V^*)^+ \})$ is a matrix ordered space such that $M_n(V^*)^+$ is norm closed for each $n$. We put 
$$Q_{n}(V)=\{f\in M_{n}(V^*): f\geq 0,\Vert f\Vert_{n}\leq 1 \}.$$ 
Then $Q_n(V)$ is called the \emph{quasi state} of $M_{n}(V).$ We know that $Q_{n}(V)$ is compact convex set with respect to $w^*$-topology. 
\begin{lemma}\label{d4}
	$M_{n}(V^{*})_{sa}\cap M_{n}(V^*)_{1}=\co(Q_{n}(V)\cup -Q_{n}(V)).$ 
\end{lemma}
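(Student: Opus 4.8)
The plan is to prove the two inclusions separately; $\supseteq$ is routine and $\subseteq$ carries the real content. For $\supseteq$, observe that every $f\in Q_n(V)$ is self-adjoint (since $M_n(V^*)^+\subseteq M_n(V^*)_{sa}$) and satisfies $\Vert f\Vert_n\le 1$, so $f\in M_n(V^*)_{sa}\cap M_n(V^*)_1$; the same holds for $-f$, because $*$ is an isometry. As $M_n(V^*)_{sa}$ is a real-linear subspace and $M_n(V^*)_1$ is a norm ball, their intersection is convex, hence contains $\co(Q_n(V)\cup -Q_n(V))$. (I would also note in passing that $\co(Q_n(V)\cup -Q_n(V))$ is already $w^*$-compact, being the convex hull of the union of two $w^*$-compact convex sets, so no closure operation is needed.)

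For $\subseteq$, I would invoke the embedding theorem of \cite{KARN11} quoted above: fix a complete order isometry $\phi\colon V\hookrightarrow\A$ into a C$^*$-algebra $\A$, so that at each matrix level $\phi_n\colon M_n(V)\to M_n(\A)$ is an isometric order embedding, i.e. $\Vert\phi_n(v)\Vert_{M_n(\A)}=\Vert v\Vert_n$ and $M_n(V)^+=\phi_n^{-1}(M_n(\A)^+)$. Using the $L^1$-matricial structure of $V^*$ from \cite[Theorem 5.1]{R98}, I identify $M_n(V^*)$ isometrically with the Banach dual $(M_n(V))^*$ through the scalar pairing $\langle v,f\rangle=\sum f_{i,j}(v_{i,j})$, and similarly $M_n(\A^*)$ with $(M_n(\A))^*$. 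The key structural point is then that the adjoint map $(\phi_n)^*\colon (M_n(\A))^*\to (M_n(V))^*$ is (i) a metric surjection (Hahn--Banach norm-preserving extension of functionals along the isometry $\phi_n$), (ii) $*$-preserving (as $\phi$ is a $*$-map), and (iii) maps the quasi-state set $Q(M_n(\A))=\{\psi\in M_n(\A)^*:\psi\ge 0,\ \Vert\psi\Vert\le 1\}$ into $Q_n(V)$: positivity survives because $\phi_n$ is an order embedding, so $v\in M_n(V)^+\Rightarrow\phi_n(v)\in M_n(\A)^+\Rightarrow\psi(\phi_n(v))\ge 0$, and norms do not increase because $\phi_n$ is isometric.

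Granting this, take $f\in M_n(V^*)_{sa}$ with $\Vert f\Vert_n\le 1$. By (i) lift it to $\Psi\in (M_n(\A))^*$ with $(\phi_n)^*\Psi=f$ and $\Vert\Psi\Vert\le 1$; replacing $\Psi$ by $\tfrac12(\Psi+\Psi^*)$ and using (ii) together with $f^*=f$, I may assume $\Psi=\Psi^*$. Since $M_n(\A)$ is a C$^*$-algebra, the Jordan decomposition yields $\Psi=\Psi^+-\Psi^-$ with $\Psi^{\pm}\ge 0$ and $\Vert\Psi^+\Vert+\Vert\Psi^-\Vert=\Vert\Psi\Vert\le 1$; hence $\Psi$ is a finite convex combination of elements of $Q(M_n(\A))\cup -Q(M_n(\A))$ (padding with $0$ so the weights sum to $1$). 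Applying $(\phi_n)^*$ and invoking (iii), one gets $f=(\phi_n)^*\Psi$ as a convex combination of elements of $Q_n(V)\cup -Q_n(V)$, i.e. $f\in\co(Q_n(V)\cup -Q_n(V))$, which proves $\subseteq$ and finishes the proof.

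The step I expect to be most delicate is the bookkeeping around the $L^1$-matricial norm on $V^*$: one must confirm that $\Vert\cdot\Vert_n$ on $M_n(V^*)$ is genuinely the Banach-dual norm of $(M_n(V))^*$ under the stated pairing (so that Hahn--Banach applies at the $n$-th matrix level and $(\phi_n)^*$ is a metric surjection), and that $*$ and positivity transport correctly through this identification and through $(\phi_n)^*$. Everything else is assembling standard ingredients --- the embedding theorem, Hahn--Banach extension, and the Jordan decomposition of a self-adjoint functional on a C$^*$-algebra.
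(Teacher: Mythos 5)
Your proof is correct, but it takes a genuinely different route from the paper's. The paper's entire argument for the nontrivial inclusion is a one-line appeal to an intrinsic decomposition theorem, \cite[Theorem 2.2]{KARN10}: every $f\in M_n(V^*)_{sa}$ splits as $f=g-h$ with $g,h\in M_n(V^*)^+$ and $\Vert f\Vert_n=\Vert g\Vert_n+\Vert h\Vert_n$, after which normalizing $g$ and $h$ and padding with $0\in Q_n(V)$ gives the convex combination directly, with no reference to any concrete representation. You instead manufacture the decomposition externally: embed $V$ into a C$^*$-algebra via \cite{KARN11}, lift $f$ by Hahn--Banach to a self-adjoint functional $\Psi$ on $M_n(\A)$ of the same norm, apply the classical Jordan decomposition there, and push back down along $(\phi_n)^*$. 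Both are sound; note that your chain $\Vert f\Vert_n\le\Vert(\phi_n)^*\Psi^+\Vert_n+\Vert(\phi_n)^*\Psi^-\Vert_n\le\Vert\Psi^+\Vert+\Vert\Psi^-\Vert=\Vert\Psi\Vert=\Vert f\Vert_n$ in fact recovers the exact additivity asserted by the cited theorem in this special case, so your argument amounts to a proof of the needed instance of \cite[Theorem 2.2]{KARN10} rather than a use of it. The trade-off: the paper's route is shorter and stays entirely inside the abstract duality, while yours is self-contained modulo the representation theorem but hinges on the bookkeeping you yourself flag --- that the $L^1$-matricial norm on $M_n(V^*)$ from \cite[Theorem 5.1]{R98} is exactly the Banach dual norm of $(M_n(V),\Vert\cdot\Vert_n)$ under the scalar pairing, which is what makes the Hahn--Banach lift norm-preserving and $(\phi_n)^*$ a metric surjection. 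The easy inclusion is handled identically in both.
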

\begin{proof} 
	Let $f \in M_n(V^*)_{sa}.$ Then by \cite[Theorem 2.2]{KARN10}, there are $g, h \in M_n(V^*)^+$ such that $f = g - h$ and $\Vert f \Vert_n=\Vert g \Vert_n + \Vert h \Vert_n$. Therefore $M_n(V^*)_{sa} \cap M_n(V)_1 \subseteq \co(Q_n(V) \cup (-Q_n(V))).$ Since $\pm Q_n(V) \subseteq M_n(V^*)_{sa} \cap M_n(V)_1$ and $M_n(V^*)_{sa} \cap M_n(V)_1$ is convex, we have $\co(Q_n(V)\cup (-Q_n(V)))\subseteq M_n(V^*)_{sa}\cap M_n(V)_1.$
\end{proof} 
Now, we describe a `quantized' functional representation of a C$^*$-ordered operator space $V$. For $v\in V$, define $\check{v}:V^*\mapsto \C$ given by $\check{v}(f)=f(v)$ for all $f \in V^*$. Then $\check{v}$ is a $w^*$-continuous linear functional on $V^*$. We set $\check{v}|_{Q(V)} = \hat{v}$. Then $\hat{v}: Q_{1}(V)\mapsto \C$ is an affine, $w^*$-continuous map on $Q_{1}(V)$ such that $\hat{v}(0)=0$. Note that $\check{v}$ is the unique extension of $\hat{v}$ on $V^*$ as a $w^*$-continuous linear functional for $V^{*+}= \cup_{k\in \N}kQ_{1}(V)$ and $V^{*+}$ spans $V^*$. We write $A_{0}(Q_{1}(V), V^*)$ for the space of all $w^*$-continuous affine mappings from $Q_{1}(V)\mapsto \C$ vanishing at $0$ and having a unique $w^*$-continuous linear extension to $V^*$. Then $v \mapsto \hat{v}$ determines a linear $*$-isomorphism from $\Gamma: V \to A_{0}(Q_{1}(V), V^*)$. Further as $w^*$-dual of $V^*$ is identified with $V$, we may conclude that $\Gamma$ is surjective. For $v\in V$, set $(\check{v})^*= \check{(v^*)}$ so that 
$$
(\check{v})^*(f)=f(v^*)= \overline{f^*(v)}= \overline{\check{v}(f^*)}
$$
for all $f\in V^*$. In particular for $v\in V_{sa}$ and $f\in V^*_{sa},$ $(\check{v})^*(f) = \check{v}(f)\in \R$. Similarly, if $v\in V^+$ and $f\in V^{*+},$ then $\check{v}(f)\geq 0$. In fact, as $v\in V^+$ if and only if $f(v)\geq 0$ for every $f\in Q(V)$, we may conclude that
\begin{align*} 
\Gamma(V^+)&=\{\phi\in A_{0}(Q_{1}(V), V^*)_{sa}: \phi(f)\geq 0 ~\forall f\in Q_{1}(V)\}\\
&:=A_{0}(Q_{1}(V), V^{*})^+.
\end{align*}
In other words, $\Gamma$ is an order isomorphism. Now using matrix duality, we may further conclude that 
$$\Gamma_{n}: M_{n}(V)\mapsto A_{0}(Q_{n}(V), M_{n}(V^*))$$
given by 
$$\Gamma_{n}([v_{i,j}])= [\widehat{v_{i,j}}],~~[v_{i,j}]\in M_{n}(V)$$
is a surjective order isomorphism for each $n\in \N$. Now, if we identify $A_{0}(Q_{n}(V), M_{n}(V^*))$ with $M_{n}(A_{0}(Q_{1}(V), V^*))$ for each $n \in \N$, then $\Gamma : V\mapsto A_{0}(Q_{1}(V), V^*)$ is a surjective order isomorphism.

Next, we describe a norm on $A_{0}(Q_{n}(V), M_{n}(V^*))$. Let $F \in A_{0}(Q_{n}(V), M_{n}(V^*)).$ Then there is an unique $v\in M_{n}(V)$ such that $F= \Gamma_{n}(v) = \widehat{v}.$ We define 
\begin{align} 
\Vert F\Vert_{\infty,n}=
\sup\left\{\left\vert
\begin{bmatrix}
0 &\widehat{v}\\
\widehat{v^*}&0
\end{bmatrix}(f)\right\vert: f\in Q_{2n}(V)
\right\}.
\end{align}
As $v\in M_{n}(V),$ we have
$
\begin{bmatrix}
0 & v\\
v^*&0
\end{bmatrix}\in M_{2n}(V)_{sa}.$ 
Since $*$ is isometry in $V$, using Lemma \ref{d4}, we have 
\begin{eqnarray*}
	\left\vert\left\vert \begin{bmatrix}
		0  &v\\
		v^*&0
	\end{bmatrix}\right\vert\right\vert_{n} &=&
	\sup\left\{\left\vert
	\begin{bmatrix}
		0 &\widehat{v}\\
		\widehat{v^*}&0
	\end{bmatrix}(f)\right\vert: f\in M_{2n}(V^*)_{sa}\cap\M_{n}(V^*)_{1}
	\right\} \\
	&=&\sup\left\{\left\vert
	\begin{bmatrix}
		0 &\widehat{v}\\
		\widehat{v^*}&0
	\end{bmatrix}(f)\right\vert: f\in Q_{n}(V)
	\right\}.
\end{eqnarray*}
Also as $\ast$ is isometry and $\{\Vert\cdot\Vert_{n}\}$ is $L^{\infty}$-matrix norm, we
have $\Vert v \Vert_{n}= \left \Vert 
\begin{bmatrix}
0 & v\\
v^*&0
\end{bmatrix}\right\Vert_{2n}$ so that
$\Vert v \Vert_{n}= \Vert \widehat{v}\Vert_{\infty,n}.$ 
We summarize this discussion in the following: 
\begin{theorem} 
	$\Gamma: V \mapsto A_{0}(Q_1(V), V^*)$ is a surjective, complete isometric, complete order isomorphism. 
\end{theorem}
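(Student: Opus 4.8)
The statement is a summary of the discussion immediately preceding it, so the plan is to assemble the three assertions --- surjectivity, complete isometry, and complete order isomorphism --- from the facts already prepared, being careful that nothing is lost when one passes between $A_{0}(Q_{n}(V), M_{n}(V^*))$ and $M_{n}(A_{0}(Q_{1}(V), V^*))$. First I would record that $\Gamma$ is a linear $*$-isomorphism onto $A_{0}(Q_{1}(V), V^*)$. For $v \in V$ the functional $\check{v}$ on $V^*$ is $w^*$-continuous, affine and vanishes at $0$; since $V^{*+}=\bigcup_{k\in\N}k\,Q_{1}(V)$ is generating, $\check{v}$ is the \emph{unique} $w^*$-continuous linear extension of $\hat{v}=\check{v}|_{Q_{1}(V)}$, so $\hat v\in A_{0}(Q_{1}(V),V^*)$ and $\Gamma$ is well defined. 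Linearity is immediate, the relation $(\check v)^*=\check{(v^*)}$ shows $\Gamma$ is a $*$-map, injectivity follows from properness of $V^+$ (so that $V^{*+}$, hence $V^*$, separates the points of $V$), and surjectivity is precisely the identification of the $w^*$-dual of $V^*$ with $V$: any $F\in A_{0}(Q_{1}(V),V^*)$ extends to a $w^*$-continuous linear functional on $V^*$, which is some $\check v$, whence $F=\Gamma(v)$.

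Next, for the order part I would pass to the matrix levels. By matrix duality the map $\Gamma_{n}([v_{i,j}])=[\widehat{v_{i,j}}]$ is the level-$n$ analogue of $\Gamma$, and the key point is that $v\in M_{n}(V)^+$ if and only if $f(v)\geq 0$ for every $f\in Q_{n}(V)$. The forward direction is the definition of $M_{n}(V^*)^+$; the converse is a Hahn--Banach/bipolar argument that uses precisely the hypothesis that $M_{n}(V)^+$ is norm closed (together with properness of $V^+$). This yields $\Gamma_{n}(M_{n}(V)^+)=A_{0}(Q_{n}(V),M_{n}(V^*))^+$, so each $\Gamma_{n}$ is an order isomorphism, i.e. $\Gamma$ is a complete order isomorphism once $A_{0}(Q_{n}(V),M_{n}(V^*))$ is identified with $M_{n}(A_{0}(Q_{1}(V),V^*))$.

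For the complete isometry I would use the norm formula established just above the theorem: for $F=\Gamma_{n}(v)$, Lemma~\ref{d4} together with the fact that $*$ is an isometry on $M_{2n}(V)$ gives
$$\Vert F\Vert_{\infty,n}=\left\Vert\begin{bmatrix}0 & v\\ v^* & 0\end{bmatrix}\right\Vert_{2n},$$
and, since $*$ is an isometry and $\{\Vert\cdot\Vert_{n}\}$ is an $L^{\infty}$-matrix norm, the right-hand side equals $\Vert v\Vert_{n}$. Hence $\Vert\Gamma_{n}(v)\Vert_{\infty,n}=\Vert v\Vert_{n}$ for every $n$, which is exactly the statement that $\Gamma$ is a complete isometry. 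Combining the three conclusions gives the theorem.

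The only genuine work --- and hence the main obstacle --- is the bookkeeping around the identification $A_{0}(Q_{n}(V),M_{n}(V^*))\cong M_{n}(A_{0}(Q_{1}(V),V^*))$: one must check that the matrix order $\{A_{0}(Q_{n}(V),M_{n}(V^*))^+\}$ and the matrix norm $\{\Vert\cdot\Vert_{\infty,n}\}$ transported through the $\Gamma_{n}$ are mutually compatible and coincide with those coming from applying the $M_{n}$-functor to $A_{0}(Q_{1}(V),V^*)$, so that the three assertions can honestly be read off simultaneously. Everything else reduces to a direct appeal to the preparatory results, above all Lemma~\ref{d4}, the generating property of $V^{*+}$, and the norm closedness of the cones $M_{n}(V)^+$.
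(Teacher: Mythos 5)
Your proposal is correct and follows essentially the same route as the paper: the paper's ``proof'' is exactly the discussion preceding the theorem, which establishes the linear $*$-isomorphism and surjectivity via the identification of the $w^*$-dual of $V^*$ with $V$, the complete order isomorphism via $\Gamma_n$ and the characterization $v\in M_n(V)^+ \iff f(v)\ge 0$ for all $f\in Q_n(V)$, and the complete isometry via Lemma~\ref{d4} together with $\Vert v\Vert_n = \bigl\Vert\begin{bmatrix}0 & v\\ v^* & 0\end{bmatrix}\bigr\Vert_{2n}$. Your additional remarks on the converse of the positivity criterion and on the identification $A_{0}(Q_{n}(V),M_{n}(V^*))\cong M_{n}(A_{0}(Q_{1}(V),V^*))$ only make explicit what the paper leaves implicit.
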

\section{Convexity of Matricial Quasi States} 
Let $V$ be a C$^*$-ordered operator space. In Section 2, we observed that $V$ has a quantized functional representation over the `matricial quasi-state' $\{ Q_n(V) \}$ of $V$. In this section, we shall discuss the matricial convexity of $\{ Q_n(V) \}$ in order to characterize the `quantized' functional representation of a C$^*$-ordered operator space. First, let us note that $\{ Q_n(V) \}$ fails to be a matrix convex set.
\begin{definition} 
	Let $W$ be a vector space. A collection $\{K_{n}\}$ with $K_{n} \subseteq M_{n}(W)$ is called a matrix convex set if $ \sum_{i=1}^{k} \gamma_{i}^{*}w_{i} \gamma_{i} \in K_{m}$ whenever $w_{i}\in M_{n_{i}}(W)$ and matrices $\gamma_{i} \in M_{n_{i},m}(1 \le i \le k)$ satisfy $\sum_{i=1}^{k} \gamma_{i}^{*}\gamma_{i}=I_{m}.$ 
\end{definition}
We recall that if $V$ is an $L^{\infty}$-matricially normed space (abstract operator space), then $\{M_{n}(V)_{1}\}$ is a matrix convex set. Here $E_1 := \{ v \in E: \Vert v \Vert \le 1 \}$. Hence if $V$ is a C$^{\ast}$-ordered operator space, then $\{M_{n}(V)_{1}^{+}\}$ is a matrix convex set. However, in this case, $\{Q_{n}(V)\}$ is not a matrix convex set. To see this, let $f\in Q(V)$ with $\Vert f \Vert=1$. Then $\Vert f \oplus f \Vert_{2} = 2$ so that $f \oplus f \notin Q_{2}(V).$ Whereas in a matrix convex set $\{ K_{n} \},$  we have $K_{1} \oplus K_{1} \subset K_{2}$ so that $\{Q_{n}(V)\}$ is not a matrix convex set. In fact, $\{Q_{n}(V)\}$ exhibit another kind of matricial convexity. 
\begin{definition}
	Let $K$ be a compact convex set in a locally convex set $V$ such that $0\in {\ext(K)}.$ An element
	$k\in K$ will be called a \emph{lead point} of $K$ $(k\in \lead(K))$ if $k=\alpha k_{1}$ for some $k_{1}\in K$ with $\alpha\in
	[0,1],$ then $\alpha=1.$
\end{definition}
We observe that $\ext(K)\setminus \{0\}\subseteq \lead(K).$
\begin{proposition}\label{c4}
	For each $k\in K\setminus \{0\}.$ There is a unique $\alpha \in (0,1]$ and $k_{1}\in \lead(K)$ such that
	$k=\alpha k_{1}.$
\end{proposition}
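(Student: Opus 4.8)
The plan is to reduce everything to the geometry of the one-dimensional ray through $k$. Fix $k\in K\setminus\{0\}$ and put $S=\{t\ge 0: tk\in K\}$. First I would check that $S$ is a compact interval. It contains $0$ (since $0\in K$) and $1$ (since $k\in K$); it is an interval because, for $0\le s\le t$ with $t\in S$, one has $sk=\tfrac{s}{t}(tk)+(1-\tfrac{s}{t})\cdot 0$, a convex combination of points of $K$; and it is compact because, $V$ being Hausdorff, the line $\R k$ is a closed $1$-dimensional subspace, so $K\cap\R k$ is compact, and its preimage under the linear homeomorphism $t\mapsto tk$ is a compact subset of $\R$ (then intersect with $[0,\infty)$). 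Hence $S=[0,\beta]$ for a unique $\beta\in[1,\infty)$, and in particular $\beta\in S$.

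Next I would define $k_{1}=\beta k$ and $\alpha=\beta^{-1}\in(0,1]$, so that $k_{1}\in K$ and $k=\alpha k_{1}$; it remains to see that $k_{1}\in\lead(K)$. Suppose $k_{1}=\gamma k_{2}$ with $k_{2}\in K$ and $\gamma\in[0,1]$. If $\gamma=0$ then $k_{1}=0$, forcing $k=0$ (as $\beta>0$), a contradiction; so $\gamma>0$ and $k_{2}=\gamma^{-1}\beta\,k$, i.e.\ $\gamma^{-1}\beta\in S$, whence $\gamma^{-1}\beta\le\beta$ and $\gamma\ge 1$. Thus $\gamma=1$, so $k_{1}\in\lead(K)$.

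For uniqueness, suppose $k=\alpha k_{1}=\alpha' k_{1}'$ with $k_{1},k_{1}'\in\lead(K)$ and $\alpha,\alpha'\in(0,1]$, and assume without loss of generality $\alpha\le\alpha'$. Then $k_{1}'=(\alpha/\alpha')k_{1}$ with $k_{1}\in K$ and $\alpha/\alpha'\in(0,1]$, so the lead-point property of $k_{1}'$ forces $\alpha/\alpha'=1$; hence $\alpha=\alpha'$ and $k_{1}=k_{1}'$.

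I expect the only genuinely delicate point to be the compactness (equivalently boundedness) of $S$, which is exactly what rules out the degenerate possibility that the ray $\{tk:t\ge 0\}$ remains inside $K$ for all $t$; this is where the compactness of $K$ in a (Hausdorff) locally convex space is used. Everything else is routine manipulation of convex combinations along the ray, and the hypothesis $0\in\ext(K)$ plays no role beyond being the standing assumption under which $\lead(K)$ was defined.
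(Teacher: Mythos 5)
Your proof is correct and follows essentially the same route as the paper's: both locate $k_{1}$ by maximizing the scalar $t$ with $tk\in K$ along the ray through $k$ (your $S$ is the paper's $R_{K}=\{\beta\ge 1:\beta k\in K\}$ up to the harmless tail $[0,1]$), using compactness of $K$ to see the supremum is attained, and then deduce the lead property and uniqueness by comparing coefficients on that ray. Your uniqueness step is in fact slightly cleaner than the paper's, and your explicit justification of the boundedness of $S$ fills in a detail the paper only asserts.
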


\begin{proof}
	Without any loss of generality, we may assume that $k\in K\setminus \lead(K).$ Then there is an
	$\alpha\in (0,1]$ and $k\in K$ such that $k=\alpha k_{1}.$ Thus the set $R_{K} = \{\beta \geq 1: \beta k \in K \}$ is non-empty. As $K$ is a compact $R_{K}$ is bounded and we have $\beta _{0}=\sup R_{K}\in R_{K}.$ Let
	$k_0 =\beta_{0}k\in K$ so that $k =\beta_{0}^{-1}k_{0}.$ We show that $k_{0}\in \lead(K).$ 
	If possible, assume that $k_0 \not\in \lead(K).$ Then by the definition of lead, there is a $\beta\in (0,1)$ and  
	$k^{\prime}\in K$ such that
	$k_{0}=\beta k^{\prime}.$ But, then $\beta^{-1}\beta_{0}k\in K,$ where $\beta^{-1}\beta_{0}>\beta_0,$ 
	which contradict $\beta_{0}=\sup R_{K}.$ Thus $k_{0}\in \lead(K).$ Next we prove the uniqueness of $k_0.$
	Let $k=\alpha_{1}k_{1}$ for some $k_{1} \in \lead(K)$  and 
	$\alpha_{1} \in (0,1].$ We see that $k_{1}=\alpha_{1}^{-1}\beta_{0} k_0.$ Thus
	$\alpha_{1}^{-1}\beta_{0}=1$ and hence $\alpha_{1}=\beta_{0}, k_{1}=k_{0}.$
\end{proof}
A \emph{$*$-locally convex space} is a locally convex space together with
$*$-operation which is a homeomorphism. In this case $M_{n}(V)$ is also a $*$-locally convex
space with respect to product topology.
\begin{definition}[$L^{1}$-matrix convex set]\label{b6} Let $V$ be a $*$-locally convex
	space. Let $\{K_{n}\}$ be a collection of compact convex sets $K_{n}\subseteq
	M_{n}(V)_{sa}$ with $0\in \partial_{e}(K_{n})$ for all $n.$  
	Then the collection of sets $\{K_{n}\}$ is called
	an \emph{$L^{1}$-matrix convex set} if the following conditions hold:
	
	\begin{itemize}
		\item [${\bf L_{1}}$] If $u\in K_{n}$ and $\gamma_{i}\in \M_{n, n_{i}}$ such that
		$\sum_{i=1}^{k}\gamma_{i}\gamma_{i}^{*}\leq I_{n},$ then
		$\oplus_{i=1}^{k}\gamma_{i}^*u\gamma_{i}\in K_{\sum_{i=1}^{k} n_{i}}.$
		\item [${\bf L_{2}}$] If $u\in K_{2n}$ so that 
		$u=\begin{bmatrix}
		u_{11}& u_{12} \\
		u_{12}^*& u_{22} 
		\end{bmatrix}$ 
		for some $u_{11}, u_{22}\in K_{n}$  and $u_{12}\in M_{n}(V),$
		then $u_{12}+u_{12}^{*}\in \co(K_{n}\cup-K_{n}).$
		\item [${\bf L_{3}}$] Let $u\in K_{m+n}$ with 
		$u=\begin{bmatrix}
		u_{11}& u_{12} \\
		u_{12}^*& u_{22} 
		\end{bmatrix}$ 
		so that $u_{11}\in K_{m}, u_{22}\in K_{n}$ and $u_{12}\in M_{m,n}(V)$ and if 
		$u_{11}=\alpha_{1}\widehat{u_{11}}, u_{22}=\alpha_{22}\widehat{u_{22}}$ with 
		$\widehat{u_{11}}\in \lead(K_{m}), \widehat{u_{22}}\in\lead(K_{n}),$
		then $\alpha_{1}+\alpha_{2}\leq 1.$
	\end{itemize}
\end{definition} 
The following result shows that $\{Q_{n}(V)\}$ is an $L^1$-matrix convex set. For each $n \in \N$, put $S_{n}(V) = \{ f \in Q_n(V): \Vert f \Vert_n = 1 \}.$ 
\begin{proposition} \label{b4} 
	Let $V$ be a C$^{\ast}$-ordered operator space and let $f\in Q_{m+n}(V)$ so that $f=\begin{bmatrix} f_{11}& f_{12} \\ f_{12}^*& f_{22} \end{bmatrix}$ for some $f_{11}\in M_{m}(V^*)^+$, $f_{22}\in M_{n}(V^*)^+$ and $f_{12} \in M_{m,n}(V^*)$. Then
	\begin{enumerate} 
		\item $f_{11} \in Q_{m}(V)$ and $f_{22} \in Q_{n}(V)$; 
		
		\item $\begin{bmatrix} f_{11}& e^{i\theta} f_{12} \\ e^{-i\theta} f_{12}^*& f_{22} \end{bmatrix} \in Q_{m+n}(V)$ for any $\theta\in \R;$ 
		
		\item $\left\vert\left\vert \begin{bmatrix} 0& f_{12} \\ f_{12}^*& 0 \end{bmatrix} \right\vert\right\vert_{m+n} \leq \left\vert\left\vert \begin{bmatrix} f_{11}& f_{12} \\ f_{12}^*& f_{22} \end{bmatrix} \right\vert\right\vert_{m+n}, \left\vert\left\vert \begin{bmatrix} f_{11}& 0 \\ 0 & f_{22} \end{bmatrix} \right\vert\right\vert_{m+n} \leq \left\vert\left\vert \begin{bmatrix} f_{11}& f_{12} \\ f_{12}^*& f_{22} \end{bmatrix} \right\vert\right\vert_{m+n};$ 
		
		\item If $m = n$, then $f_{12}+f_{12}^{*}\in \co(Q_{n}(V)\cup (-Q_{n}(V))).$ 
		
		\item  Let $f\in Q_{n}(V)$ and let $\gamma_{i}\in \M_{n,n_{i}}$ such that $\sum_{i=1}^{k} \gamma_{i} \gamma_{i}^* \leq I_{n}$. Then $\oplus_{i=1}^{k}\gamma_{i}^*f\gamma_{i}\in Q_{\sum_{i=1}^{k} n_{i}}(V).$ 
		
		\item Let $f\in Q_{m+n}(V)$ with $f=\begin{bmatrix} f_{11}& f_{12} \\ f_{12}^*& f_{22} \end{bmatrix}$ so that $f_{11}\in Q_{m}(V), f_{22}\in Q_{n}(V)$ and $f_{12}\in M_{n}(V)$ and let $f_{11}= \alpha_{1} \widehat{f_{11}}, f_{22} = \alpha_{2} \widehat{f_{22}}$ with $\widehat{f_{11}} \in S_{m}(V), \widehat{f_{22}} \in S_{n}(V).$ Then $\alpha_{1}+\alpha_{2}\leq 1.$ 
	\end{enumerate} 
\end{proposition}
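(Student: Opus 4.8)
The plan is to read off all six items from three structural features of the matricial dual $V^{*}$ of a C$^{*}$-ordered operator space: the cone $M_{k}(V^{*})^{+}$ is invariant under rectangular compressions $f\mapsto\gamma^{*}f\gamma$ and $\Vert\gamma^{*}f\gamma\Vert\le\Vert\gamma\Vert^{2}\Vert f\Vert$; $\ast$ is an isometry and $\Vert g\oplus h\Vert=\Vert g\Vert+\Vert h\Vert$; and each $Q_{j}(V)$ is convex. I will also use, by Lemma~\ref{d4}, that $\co(Q_{n}(V)\cup(-Q_{n}(V)))=M_{n}(V^{*})_{sa}\cap M_{n}(V^{*})_{1}$. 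For (1) I would compress $f$ by the isometry $\gamma=\begin{bmatrix}I_{m}\\ 0\end{bmatrix}\in\M_{m+n,m}$: since $\Vert\gamma\Vert=1$, $f_{11}=\gamma^{*}f\gamma\ge 0$ with $\Vert f_{11}\Vert_{m}\le\Vert f\Vert_{m+n}\le1$, so $f_{11}\in Q_{m}(V)$, and likewise $f_{22}\in Q_{n}(V)$. For (2) I would conjugate by the unitary $u_{\theta}=\mathrm{diag}(e^{-i\theta/2}I_{m},\,e^{i\theta/2}I_{n})\in\M_{m+n,m+n}$; being unitary, $u_{\theta}$ preserves positivity and norm, and a direct multiplication gives $u_{\theta}^{*}fu_{\theta}=\begin{bmatrix}f_{11}& e^{i\theta}f_{12}\\ e^{-i\theta}f_{12}^{*}& f_{22}\end{bmatrix}$, which is thus again in $Q_{m+n}(V)$.

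For (3), set $g=\begin{bmatrix}f_{11}& -f_{12}\\ -f_{12}^{*}& f_{22}\end{bmatrix}$; by (2) with $\theta=\pi$ we have $g\in Q_{m+n}(V)$ and $\Vert g\Vert_{m+n}=\Vert f\Vert_{m+n}$ since $g$ is a unitary conjugate of $f$. Then $\begin{bmatrix}0& f_{12}\\ f_{12}^{*}& 0\end{bmatrix}=\tfrac12(f-g)$ and $f_{11}\oplus f_{22}=\tfrac12(f+g)$, so the triangle inequality delivers both inequalities of (3). Rescaling and iterating the second of these over diagonal blocks also yields $\Vert a_{11}\oplus\cdots\oplus a_{kk}\Vert\le\Vert[a_{pq}]\Vert$ for any positive $[a_{pq}]\in M_{N}(V^{*})$, a form of (3) I will use next.

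Items (5) and (6) then follow quickly. For (5), set $\gamma=\begin{bmatrix}\gamma_{1}&\cdots&\gamma_{k}\end{bmatrix}\in\M_{n,\sum n_{i}}$, so that $\gamma\gamma^{*}=\sum_{i}\gamma_{i}\gamma_{i}^{*}\le I_{n}$ forces $\Vert\gamma\Vert\le1$; hence $\gamma^{*}f\gamma\in Q_{\sum n_{i}}(V)$, and $\oplus_{i}\gamma_{i}^{*}f\gamma_{i}$ is its block-diagonal part, hence positive with norm $\le\Vert\gamma^{*}f\gamma\Vert\le1$ by the iterated form of (3); thus $\oplus_{i}\gamma_{i}^{*}f\gamma_{i}\in Q_{\sum n_{i}}(V)$. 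For (6), $\widehat{f_{11}}$ and $\widehat{f_{22}}$ have norm $1$, so $\Vert f_{11}\Vert_{m}=\alpha_{1}$ and $\Vert f_{22}\Vert_{n}=\alpha_{2}$, and by (3) and the $L^{1}$-identity $\Vert f_{11}\oplus f_{22}\Vert_{m+n}=\Vert f_{11}\Vert_{m}+\Vert f_{22}\Vert_{n}$ we get $\alpha_{1}+\alpha_{2}=\Vert f_{11}\oplus f_{22}\Vert_{m+n}\le\Vert f\Vert_{m+n}\le1$.

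The hard part will be (4): a naive compression (by $\tfrac{1}{\sqrt2}\begin{bmatrix}I_{n}\\ I_{n}\end{bmatrix}$ and $\tfrac{1}{\sqrt2}\begin{bmatrix}I_{n}\\ -I_{n}\end{bmatrix}$) only gives $\tfrac12(f_{12}+f_{12}^{*})\in\co(Q_{n}(V)\cup(-Q_{n}(V)))$, and the missing factor $2$ has to be recovered by a symmetrisation. I would put $h=\tfrac12(f+J^{*}fJ)$ with $J=\begin{bmatrix}0& I_{n}\\ I_{n}& 0\end{bmatrix}$ the flip unitary; then $h\in Q_{2n}(V)$ (a convex combination of $f$ and its unitary conjugate $J^{*}fJ$) and $h=\begin{bmatrix}\tfrac12(f_{11}+f_{22})& c\\ c& \tfrac12(f_{11}+f_{22})\end{bmatrix}$ with $c=\tfrac12(f_{12}+f_{12}^{*})$ \emph{self-adjoint}. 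Applying (3) to $h$ gives $\left\Vert\begin{bmatrix}0& c\\ c& 0\end{bmatrix}\right\Vert_{2n}\le\Vert h\Vert_{2n}\le1$. The self-adjointness of $c$ is the crucial point: conjugating by $w=\tfrac{1}{\sqrt2}\begin{bmatrix}I_{n}& I_{n}\\ I_{n}& -I_{n}\end{bmatrix}$ (with $\Vert w\Vert=1$) sends $\begin{bmatrix}0& c\\ c& 0\end{bmatrix}$ to $c\oplus(-c)$, whence $2\Vert c\Vert_{n}=\Vert c\oplus(-c)\Vert_{2n}\le\left\Vert\begin{bmatrix}0& c\\ c& 0\end{bmatrix}\right\Vert_{2n}\le1$. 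Therefore $\Vert f_{12}+f_{12}^{*}\Vert_{n}=2\Vert c\Vert_{n}\le1$, and since $f_{12}+f_{12}^{*}$ is self-adjoint, Lemma~\ref{d4} gives $f_{12}+f_{12}^{*}\in\co(Q_{n}(V)\cup(-Q_{n}(V)))$. I expect this symmetrisation-plus-unitary-diagonalisation step to be the only genuine obstacle; the remaining care is just to keep every compression in sight an honest contraction so that no constant is lost.
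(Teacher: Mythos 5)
Your argument is correct throughout, and for items (1), (2), (3) and (6) it coincides with the paper's proof (the same compressions by $\begin{bmatrix} I_m & 0\end{bmatrix}$, $\begin{bmatrix} 0 & I_n\end{bmatrix}$ and a diagonal phase unitary, the same polarization identities $2(f_{11}\oplus f_{22})=f+g$, $2\left[\begin{smallmatrix}0 & f_{12}\\ f_{12}^* & 0\end{smallmatrix}\right]=f-g$, and the same norm computation for (6)). Where you genuinely diverge is in (4) and (5). For (4) the paper simply writes $\Vert f_{12}+f_{12}^*\Vert_n \le \Vert f_{12}\Vert_n+\Vert f_{12}^*\Vert_n = \Vert f_{12}^*\oplus f_{12}\Vert_{2n}$ (the $L^1$ direct-sum identity) and bounds the latter by $\bigl\Vert\left[\begin{smallmatrix}0 & f_{12}\\ f_{12}^* & 0\end{smallmatrix}\right]\bigr\Vert_{2n}\le 1$ via left multiplication by the flip; your symmetrization $h=\tfrac12(f+J^*fJ)$ followed by conjugation with $\tfrac{1}{\sqrt2}\left[\begin{smallmatrix}I&I\\ I&-I\end{smallmatrix}\right]$ reaches the same conclusion and recovers the factor $2$ from the same source, namely $\Vert c\oplus(-c)\Vert_{2n}=2\Vert c\Vert_n$, but at the cost of an extra averaging step; both are valid, the paper's being shorter. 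For (5) the paper argues by duality: it pairs $\oplus_i\gamma_i^*f\gamma_i$ against an arbitrary $v$ in the unit ball of $M_{\sum n_i}(V)$, transposes the $\gamma_i$ onto $v$, and invokes matrix convexity of $\{M_k(V)_1\}$ in the predual; your route stays entirely in $V^*$, forming $\gamma=\begin{bmatrix}\gamma_1&\cdots&\gamma_k\end{bmatrix}$ with $\Vert\gamma\Vert\le1$ and extracting the block diagonal of $\gamma^*f\gamma$ by an iterated version of (3). This is cleaner in that it avoids the transpose bookkeeping, but note that the iteration itself silently uses the $L^1$ additivity once per step (to pass from $\Vert a_{11}\Vert+\Vert a_{22}\oplus\cdots\oplus a_{kk}\Vert$ back to $\Vert a_{11}\oplus\cdots\oplus a_{kk}\Vert$), so you should state that explicitly; with that said, the proposal is complete.
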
 
\begin{proof} 
	We know that if $\alpha\in \M_{m,n}$ and $f\in M_{n}(V^*)$ then $\Vert \alpha f\alpha^{*} \Vert_{m} \leq \Vert \alpha \Vert^{2} \Vert f\Vert_{n}$\cite{R98}. Also, if $f\in M_{n}(V^*)^{+},$ then $\alpha f\alpha^*\in M_{m}(V^*)^{+}$ \cite[Lemma 4.2]{CE77}. Using these argument, we can prove $(1)$ and $(2)$ if we choose $\alpha = \begin{bmatrix} I_{m} & 0_{m,n} \end{bmatrix}$, $\alpha = \begin{bmatrix} 0_{n,m} & I_{n} \end{bmatrix}$ and $\alpha = \begin{bmatrix} e^{i\theta}I_{m} & 0 \\ 0 & I_{n} \end{bmatrix}$ respectively. In particular, $\left\Vert \begin{bmatrix} f_{11} & \pm f_{12} \\ \pm  f_{12}^*& f_{22} \end{bmatrix} \right\Vert_{m+n} \leq 1.$ Thus as $2 \begin{bmatrix} f_{11} &  0 \\ 0 & f_{22} \end{bmatrix} = \begin{bmatrix} f_{11} & f_{12} \\ f_{12}^*& f_{22} \end{bmatrix} + \begin{bmatrix} f_{11} & -f_{12} \\ -f_{12}^*& f_{22} \end{bmatrix}$ and $2 \begin{bmatrix} 0 & f_{12} \\ f_{12}^*&0 \end{bmatrix} = \begin{bmatrix} f_{11} & f_{12} \\ f_{12}^*& f_{22} \end{bmatrix} - \begin{bmatrix} f_{11} & -f_{12} \\ -f_{12}^*& f_{22} \end{bmatrix},$ $(3)$ follows from triangle inequality. 
	
	$(4)$ As  
	$$\left\Vert \begin{bmatrix} f_{12}^* & 0 \\ 0 & f_{12} \end{bmatrix} \right\Vert_{2n} = \left\Vert \begin{bmatrix} 0 & I_n \\ I_n & 0 \end{bmatrix} \begin{bmatrix} 0 & f_{12} \\ f_{12}^{*} & 0 \end{bmatrix} \right\Vert_{2n} \le \left\Vert \begin{bmatrix} 0 & f_{12} \\ f_{12}^* & 0 \end{bmatrix} \right\Vert_{2n} \leq 1,$$
	we have  
	$$ \Vert f_{12} + f_{12}^* \Vert_n \le \Vert f_{12}^* \Vert_n + \Vert f_{12} \Vert_n \le \left\Vert \begin{bmatrix} f_{12}^* & 0 \\ 0 & f_{12} \end{bmatrix} \right\Vert_{2n} \le 1.$$
	Since $ f_{12}+f_{12}^*\in M_{n}(V^*)_{sa},$ by Lemma \ref{d4}, we may conclude that  $f_{12}+f_{12}^*\in \co(Q_{n}(V)\cup (-Q_{n}(V))).$ 
	
	$(5)$ As $f\in Q_{n}(V)\subset M_{n}(V^*)^+$, and $\gamma_{i}\in M_{n, n_{i}}$, we have $\gamma_{i}^* f \gamma_{i} \in M_{n_{i}}(V^*)^+$ for $1 \le i \le k$. Thus $\oplus_{i=1}^k \gamma_{i}^* f\gamma_{i} \in M_{\sum_{i=1}^k n_{i}}(V^*)^+$. We show that $\Vert \oplus_{i=1}^k \gamma_{i}^* f\gamma_{i} \Vert\leq 1$. Let $v \in (M_{\sum_{i=1}^k n_i}(V)_{sa})_{1}$, say $v= [v_{i,j}]$ where $v_{i,j}\in M_{n_{i}, n_{j}}(V)$ and $v_{i,j}^*= v_{j,i}, 1\leq i,j\leq k$. Then
	\begin{align*}
	\vert \langle \oplus_{i=1}^k\gamma_{i}^* f\gamma_{i},v \rangle\vert &=  \vert \sum_{i=1}^n\langle \gamma_{i}^* f\gamma_{i}, v_{ii}\rangle\vert \\
	& = \vert \sum_{i=1}^n \langle f, \gamma_{i}^{*T}v_{i,i}\gamma_{i}^{T} \rangle \vert \\
	& \leq \Vert \sum_{i=1}^k \gamma_{i}^{*T} v_{i,i}\gamma_{i}^{T}\Vert  \textrm{ for } f \in Q_{n}(V).
	\end{align*}
	Since $\sum_{i=1}^k \gamma_{i} \gamma_{i}^* \leq I_{n}$, we have 
	$$\left\Vert \sum_{i=1}^k \gamma_{i}^{*T}\gamma_{i}^T \right\Vert = \left\Vert \left( \sum_{i=1}^k \gamma_{i} \gamma_{i}^* \right)^{T} \right\Vert = \left\Vert \sum_{i=1}^k \gamma_{i} \gamma_{i}^* \right\Vert \leq 1.$$  
	Thus $\sum_{i}\gamma_{i}^{*T}\gamma_{i}^{T}\leq I_{n}$. Since $\Vert v_{i,i}\Vert_{n_{i}} \leq \Vert v\Vert_{\sum_{i=1}^k n_{i}} \leq 1$ for $1 \le i \le k$ and since $\{(M_{n}(V)_{sa})_{1}\}$ is a matrix convex set, we find that $\Vert \sum_{i=1}^k \gamma_{i}^{*T} v_{i,i} \gamma_{i}^{T} \Vert \leq 1$. Thus $\Vert \oplus_{i=1}^k \gamma_{i}^* f \gamma_{i} \Vert \leq 1$ so that $\oplus_{i=1}^k \gamma_{i}^* f \gamma_{i} \in Q_{\sum_{i=1}^k n_{i}}(V)$.
	
	$(6)$ Let $f= \begin{bmatrix} f_{11}& f_{12} \\ F_{12} & f_{22} \end{bmatrix} \in Q_{m+n}(V).$ Then by $(3)$, $f_{11} \in M_{m}(V^*)^+$ and $f_{22} \in M_{n}(V^*)^+$ and we have $\Vert f_{11} \Vert_{m} + \Vert f_{22} \Vert_{n} \leq 1$. Find $\widehat{f_{11}} \in S_{m}(V),$ $\widehat{f_{22}} \in S_{n}(V)$ such that $f_{11}=\Vert f_{11} \Vert_{m} \widehat{f_{11}}$ and $f_{22} = \Vert f_{22} \Vert_{n} \widehat{f_{22}}.$ Thus $(2)$ holds.
\end{proof}
\begin{remark} Let $V$ be a C$^{\ast}$-ordered space. Then by Proposition \ref{b4}, $\{Q_{n}(V)\}$ is an $L^{1}$-matrix convex set with $\lead(Q_{n}(V)) = S_{n}(V).$ In particular, $M_{n}(\T(H))_{1}^{+}$ is an $L^{1}$-matrix convex set. 
\end{remark}    
\section{A Quantized $A_0(K)$-space} 

Throughout in this section, we shall assume that $V$ is a $*$-locally convex space and that $\{K_{n}\}$ is an $L^{1}$- matrix convex set in $V_{sa}$. We shall also assume that $M_{n}(V)^{+} = \cup_{r=1}^{\infty } r K_{n}$ is a cone in $M_n(V)_{sa}$ for all $n$ (so that $(V, \{ M_n(V)^+\})$ is a matrix ordered space, by ${\bf L_1}$) such that $V^+$ is proper and generating.  For each $n$, we define
\begin{multline*}A_{0}(K_{n},M_{n}(V)):=\{a:K_{n}\mapsto \C | a \mbox{ is continuous and affine}; a(0)=0; \mbox{and} \\
a \mbox{ extends to a continuous linear functional} ~ \tilde{a}:M_{n}(V)\mapsto \C \}.
\end{multline*}
Let $a\in A_{0}(K_{n}, M_{n}(V))$. Since $\{ K_{n} \}$ is an $L^{1}$-matrix convex set and since $K_{n}$ spans $M_{n}(V),$ for $v\in M_{n}(V)$, we have $v= \sum_{j=1}^{r} \lambda_{j} v_{j} + i\sum_{k=1}^{r}\lambda_{k}^{\prime} v_{k}^{\prime}$ for some $v_{j},v_{j}^{\prime} \in K_{n}$ and $\lambda_{i}, \lambda_{j}^{\prime} \in \R.$ Thus $\tilde{a}(v) =\sum_{j=1}^{r} \lambda_{j}a(v_{j}) + i\sum_{k=1}^{r} \lambda_{k}^{\prime} a(v_{k}^{\prime}).$ Therefore, such an extension is always unique. For $a \in A_{0}(K_{n}, M_{n}(V)),$ we define $a^*(u)=\overline{a(u)}$ for all $u\in K_{n}$ so that $\tilde{a^*}(u)=\overline{\tilde{a}(u^{*})}$ for all $u\in M_{n}(V).$ Then $a\mapsto a^{*}$ is an involution. We set
$$A_{0}(K_{n},M_{n}(V))_{sa}=\{ a\in A_{0}(K_{n}, M_{n}(V)): a^{*}=a\}.$$ 
We consider the following algebraic operations:
\begin{enumerate}
	\item For $\alpha\in \M_{m,n}, \, \beta \in \M_{n,m} \text{ and } a\in A_{0}(K_{n}, M_{n}(V)),$ we define 
	$$\alpha a \beta(v) = \tilde{a}(\alpha^{T} v \beta^{T}) ~\textrm{for all}~ v \in K_{m}.$$ 
	Then $\alpha a \beta\in A_{0}(K_{m}, M_{m}(V))$. In fact, the map $v\mapsto \alpha^T v \beta^T$ from $M_{m}(V)$ to $M_{n}(V)$ is continuous so that the map $v \mapsto \tilde{a}(\alpha^T v\beta^T)$ from $M_{m}(V)$ into $\C$ is also continuous. Thus $\widetilde{\alpha v \beta}: M_{m}(V) \mapsto \C$ is continuous and hence
	$\alpha a\beta\in A_{0}(K_{m}, M_{m}(V)).$ 
	\item For $a\in A_{0}(K_{n}, M_{n}(V))$ and $b\in A_{0}(K_{m}, M_{m}(V)),$ we define 
	$$ (a\oplus b)(v)=a(v_{11})+b(v_{22})$$
	for all $v\in K_{n+m}$ where  
	$v=\begin{bmatrix}
	v_{11}&v_{12}\\
	v_{12}^*& v_{22}
	\end{bmatrix}$ for some $v_{11}\in K_{n}, v_{22}\in K_{m}, v_{12}\in M_{n,m}(V).$ Then 
	$a\oplus b \in A_{0}(K_{n+m},M_{n+m}(V))$. In fact, the maps $v\mapsto v_{11}$ from $K_{m+n}$ into $K_{m}$ and $v\mapsto v_{22}$ from $K_{m+n}$ into $K_{m}$ are continuous so that $v\mapsto a(v_{11})+ b(v_{22})$ is also continuous. As $\widetilde{a \oplus b} = \tilde{a} \oplus \tilde{b},$ we see that $\widetilde{a \oplus b}$ is also continuous from $M_{m+n}(V)\mapsto \C.$ Therefore, $a\oplus b \in A_{0}(K_{n+m},M_{n+m}(V)).$
\end{enumerate}
It is easy to deduce from the definition that $(\alpha a\beta)^{*}=\beta^* a^*\alpha^{*}$ and that
$(a\oplus b)^{*}=a^{*}\oplus b^{*}.$ We define
$$A_{0}(K_{n}, M_{n}(V))^{+}:=\{a\in A_{0}(K_{n},M_{n}(V))_{sa}: a(f)\geq 0 ~~\forall f\in
K_{n}\}.$$
\begin{lemma}\label{a2} 
	For $a\in  A_{0}(K_{m}, M_{m}(V))^{+}, b\in  A_{0}(K_{n}, M_{n}(V))^{+}$ and $\alpha \in \M_{m,n},$ we have 
	\begin{enumerate}
		\item $\alpha^*a\alpha\in  A_{0}(K_{n}, M_{n}(V))^{+},$ 
		\item $a\oplus b\in  A_{0}(K_{m+n}, M_{m+n}(V))^{+}.$
	\end{enumerate} 
\end{lemma}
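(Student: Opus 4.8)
The plan is to reduce each assertion to two checks: that the candidate element is self-adjoint, and that it takes nonnegative values on the appropriate $K_{j}$. For the self-adjointness I would invoke the identities $(\alpha c\beta)^{*}=\beta^{*}c^{*}\alpha^{*}$ and $(c\oplus d)^{*}=c^{*}\oplus d^{*}$ recorded immediately before the lemma: since $a^{*}=a$ this gives $(\alpha^{*}a\alpha)^{*}=\alpha^{*}a^{*}\alpha=\alpha^{*}a\alpha$, and since $a^{*}=a$ and $b^{*}=b$ it gives $(a\oplus b)^{*}=a^{*}\oplus b^{*}=a\oplus b$. Moreover the discussion preceding the lemma already shows $\alpha^{*}a\alpha\in A_{0}(K_{n},M_{n}(V))$ and $a\oplus b\in A_{0}(K_{m+n},M_{m+n}(V))$, so in each case only the positivity of the values remains.

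For (1), I would fix $v\in K_{n}$ and unwind the definition of the two-sided action. Writing $\gamma:=\alpha^{T}\in\M_{n,m}$, one has $\alpha^{*}a\alpha(v)=\tilde a\big((\alpha^{*})^{T}v\,\alpha^{T}\big)$, and after the transpose--conjugate bookkeeping $(\alpha^{*})^{T}=\overline{\alpha}=\gamma^{*}$ this becomes $\tilde a(\gamma^{*}v\gamma)$. Since $v\in K_{n}\subseteq M_{n}(V)^{+}$ and $(V,\{M_{n}(V)^{+}\})$ is matrix ordered (a consequence of ${\bf L_{1}}$, as noted at the start of the section), we get $\gamma^{*}v\gamma\in M_{m}(V)^{+}=\bigcup_{r}rK_{m}$; writing $\gamma^{*}v\gamma=rw$ with $r\geq 0$ and $w\in K_{m}$ yields $\alpha^{*}a\alpha(v)=r\,\tilde a(w)=r\,a(w)\geq 0$ because $a\in A_{0}(K_{m},M_{m}(V))^{+}$.

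For (2), I would fix $v\in K_{m+n}$ and block it as $v=\begin{bmatrix} v_{11}&v_{12}\\ v_{12}^{*}&v_{22}\end{bmatrix}$. Applying ${\bf L_{1}}$ with the single compression $\gamma_{1}=\begin{bmatrix} I_{m}\\ 0\end{bmatrix}\in\M_{m+n,m}$ (so $\gamma_{1}\gamma_{1}^{*}\leq I_{m+n}$) gives $v_{11}=\gamma_{1}^{*}v\gamma_{1}\in K_{m}$, and symmetrically $v_{22}\in K_{n}$; this is also exactly what makes $(a\oplus b)(v)$ meaningful. Then $(a\oplus b)(v)=a(v_{11})+b(v_{22})\geq 0$ since $a$ is nonnegative on $K_{m}$ and $b$ on $K_{n}$.

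I do not expect any genuine obstacle here; the only places needing care are the transpose/conjugate bookkeeping in part (1) that rewrites the defining formula as a congruence $\gamma^{*}v\gamma$ (so that matrix orderedness applies), and the appeal to ${\bf L_{1}}$ to see that the diagonal blocks of an element of $K_{m+n}$ again lie in $K_{m}$ and $K_{n}$.
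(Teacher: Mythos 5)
Your proposal is correct and follows essentially the same route as the paper: both reduce each part to evaluating $a$ (and $b$) on a compression of $v$, using ${\bf L_{1}}$ and the positivity of $a$ on $K_{m}$. The only cosmetic difference is in (1): the paper normalizes to $\Vert\alpha\Vert\leq 1$ so that ${\bf L_{1}}$ places $\alpha^{T^{*}}v\alpha^{T}$ directly in $K_{m}$, whereas you pass through the cone $M_{m}(V)^{+}=\bigcup_{r}rK_{m}$ and use homogeneity of $\tilde a$ --- both handle the scaling equally well.
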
 
\begin{proof} 
	\begin{enumerate} 
		\item Let $\alpha \in \M_{m,n},$ $a \in A_{0}(K_{m}, M_{m}(V))^{+}$ and let $v \in K_{n}.$ Without any loss of generality, we may assume that $\Vert \alpha \Vert \leq 1.$ Then, by the definition of an $L^{1}$-matrix convex set, we have $\alpha^{T^*} v \alpha^{T} \in K_{m}$. Thus $\alpha^* a\alpha (v)=a(\alpha^{T^*}v\alpha^{T})\geq 0$ so that $\alpha^* a \alpha\in A_{0}(K_{n}, M_{n}(V))^{+}.$ 
		\item Let $a\in A_{0}(K_{m}, M_{m}(V))^{+},b\in A_{0}(K_{n}, M_{n}(V))^{+}$ and let $u\in K_{m+n}$ with $u = \begin{bmatrix} u_{11} &u_{12} \\ u_{12}^*& u_{22}\end{bmatrix}$ for some $u_{11} \in K_{m}, u_{22} \in K_{n}$ and $u_{12} \in M_{m,n}(V).$ Then  
		$$(a \oplus b)(u)=a(u_{11}) + b(u_{22}) \geq 0$$ 
		so that $a \oplus b \in  A_{0}(K_{m+n}, M_{m+n}(V))^{+}.$ 
	\end{enumerate}
\end{proof} 
Next, for $a \in A_{0}(K_{n}, M_{n}(V))$, we define 
$$\Vert a\Vert_{\infty,n}=\sup\left\{\left\vert \begin{bmatrix} 0 & a \\ a^* & 0 \end{bmatrix}(u) \right\vert: u \in K_{2n} \right\} ~\text{for }~ a \in A_{0}(K_{n}, M_{n}(V)).$$ 
It is routine to verify that $\Vert\cdot\Vert_{\infty, n}$ is a semi-norm on $A_{0}(K_{n}, M_{n}(V))$. We show that it is a norm. Let $a\in A_{0}(K_{n}, M_{n}(V))$ such that $\Vert a \Vert_{n} = 0.$ Let $u \in K_{n}$ and $\alpha = [\frac{1}{\sqrt2}I_{n}, \frac{1}{\sqrt2}I_{n}].$ Then $\alpha^{*} \alpha \leq I_{2n}$ and therefore, $\alpha^{*} u \alpha = \begin{bmatrix} \frac{u}{2} & \frac{u}{2} \\ \frac{u}{2} & \frac{u}{2} \end{bmatrix} \in K_{2n}.$ Also, then $\begin{bmatrix} \frac{u}{2} & i\frac{u}{2} \\ -i\frac{u}{2} & \frac{u}{2} \end{bmatrix} \in K_{2n}.$ Thus, as $\Vert a \Vert_{\infty, n}=0,$ we get 
\begin{align*}
0 = \begin{bmatrix} 0 & a \\ a^{*} & 0 \end{bmatrix}\left( \begin{bmatrix} \frac{u}{2} & i \frac{u}{2} \\ -i \frac{u}{2} & \frac{u}{2} \end{bmatrix} \right) = \tilde{a}(\frac{iu}{2}) + \tilde{a^*}(\frac{-iu}{2}) = \frac{i}{2} a(u) + \frac{-i}{2} \overline{a(u)}.
\end{align*}
Similarly,
$$
0 = \begin{bmatrix} 0 & a \\ a^{*} & 0 \end{bmatrix}\left( \begin{bmatrix} \frac{u}{2} & \frac{u}{2} \\ \frac{u}{2} & \frac{u}{2} \end{bmatrix} \right) = \frac{a(u)}{2} + \frac{\overline{a(u)}}{2}.$$
Therefore 
$a(u)\pm \overline{a(u)}=0$ for all $u\in K_{n}$ and consequently $a(u)=0$ for all $u\in K_{n}.$
Hence $a=0.$  

Further, note that $\begin{bmatrix} v_{11} & v_{12} \\ v_{12}^* & v_{22} \end{bmatrix} \in K_{2n}$ if and only if $\begin{bmatrix} v_{11} & v_{12}^* \\ v_{12} & v_{22} \end{bmatrix} \in K_{2n}$ and that 
$$\begin{bmatrix} 0 & a \\ a^{*} & 0 \end{bmatrix}\left( \begin{bmatrix} v_{11} & v_{12} \\ v_{12}^{*} & v_{22} \end{bmatrix} \right) =	\begin{bmatrix} 0 &a^* \\ a & 0	\end{bmatrix}\left( \begin{bmatrix}	v_{11} & v_{12}^* \\ v_{12} & v_{22} \end{bmatrix} \right)$$ 
for $a\in A_{0}(K_{n}, M_{n}(V))$. Thus $\Vert a^*\Vert_{\infty,n}=\Vert a\Vert_{\infty,n}$ for all $a\in A_{0}(K_{n}, M_{n}(V)).$

\begin{lemma}\label{a4}
	If $a\in A_{0}(K_{n},M_{n}(V))_{sa},$ then $$\Vert a\Vert_{\infty,n}=\sup\{\vert
	a(v)\vert: v\in K_{n}\}.$$ 
	In particular, we have 
	$$\Vert a \Vert_{\infty, n} = \left\Vert \begin{bmatrix} 0 & a\\ a^* &0 \end{bmatrix} \right\Vert_{\infty, 2 n}$$ 
	for every $a \in A_{0}(K_{n}, M_{n}(V))$.
\end{lemma}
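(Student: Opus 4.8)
The plan is to prove the two displayed equalities in turn, the first being the substantive one. Fix $a \in A_0(K_n, M_n(V))_{sa}$. Since $\Vert a \Vert_{\infty,n}$ is a supremum of $\left\vert \left[ \begin{smallmatrix} 0 & a \\ a^* & 0 \end{smallmatrix}\right](u) \right\vert$ over $u \in K_{2n}$, and since for self-adjoint $a$ we have $a^* = a$, the natural first step is to unwind what $\left[ \begin{smallmatrix} 0 & a \\ a & 0 \end{smallmatrix}\right]$ does to a block element $u = \left[ \begin{smallmatrix} u_{11} & u_{12} \\ u_{12}^* & u_{22}\end{smallmatrix}\right] \in K_{2n}$: by the definition of the $\oplus$/$\alpha a \beta$ operations it equals $\tilde a(u_{12}) + \widetilde{a^*}(u_{12}^*) = \tilde a(u_{12}) + \overline{\tilde a(u_{12})} = 2\,\mathrm{Re}\,\tilde a(u_{12})$. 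So I would first establish
$$
\Vert a \Vert_{\infty,n} = \sup\{ |2\,\mathrm{Re}\,\tilde a(u_{12})| : u \in K_{2n},\ u = [\begin{smallmatrix} u_{11} & u_{12} \\ u_{12}^* & u_{22}\end{smallmatrix}]\}.
$$

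For the inequality $\Vert a \Vert_{\infty,n} \ge \sup\{|a(v)| : v \in K_n\}$: given $v \in K_n$, use the rotation trick already appearing in the norm-nondegeneracy argument above. With $\alpha = [\tfrac{1}{\sqrt 2}I_n, \tfrac{1}{\sqrt 2}I_n]$ one has $\alpha^*\alpha \le I_{2n}$, so by ${\bf L_1}$ the element $\alpha^* v \alpha$ lies in $K_{2n}$, and likewise — as is checked in the text just before Lemma~\ref{a4} using the symmetry of $K_{2n}$ under swapping off-diagonal blocks — so does $\left[ \begin{smallmatrix} v/2 & e^{i\theta} v/2 \\ e^{-i\theta} v/2 & v/2 \end{smallmatrix}\right]$ for every $\theta \in \R$. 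Evaluating $\left[ \begin{smallmatrix} 0 & a \\ a & 0 \end{smallmatrix}\right]$ on this gives $2\,\mathrm{Re}(e^{i\theta} a(v)/2) = \mathrm{Re}(e^{i\theta} a(v))$, and choosing $\theta$ to align the phase of $a(v)$ yields $|a(v)| \le \Vert a \Vert_{\infty,n}$. Taking the supremum over $v \in K_n$ gives one direction.

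For the reverse inequality $\Vert a \Vert_{\infty,n} \le \sup\{|a(v)|: v \in K_n\}$: take any $u = \left[ \begin{smallmatrix} u_{11} & u_{12} \\ u_{12}^* & u_{22}\end{smallmatrix}\right] \in K_{2n}$. By ${\bf L_2}$, $u_{12} + u_{12}^* \in \co(K_n \cup -K_n)$; applying the same to the element with $u_{12}$ replaced by $iu_{12}$ (which is again in $K_{2n}$ by part (2)-type arguments / the phase invariance) handles the imaginary part, so $\mathrm{Re}\,\tilde a(u_{12}) = \tfrac12 \tilde a(u_{12} + u_{12}^*)$ is the value of the linear functional $\tilde a$ (real on $M_n(V)_{sa}$, since $a$ is self-adjoint) at a point of $\co(K_n \cup -K_n)$. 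Since $\tilde a$ is linear and real-valued there, $|\tilde a(w)| \le \sup\{|a(v)| : v \in K_n\}$ for every $w \in \co(K_n \cup -K_n)$. Combining with the block computation from the first step gives $|2\,\mathrm{Re}\,\tilde a(u_{12})| \le 2 \cdot \tfrac12 \sup\{|a(v)|\} \cdot$ (a constant), and tracking the constant carefully — this is where I expect the only real friction, namely getting the factor exactly right so it comes out to $\le \sup\{|a(v)|: v \in K_n\}$ rather than a multiple of it — finishes the main equality. The ``in particular'' statement then follows by applying the just-proved formula to the self-adjoint element $\left[\begin{smallmatrix} 0 & a \\ a^* & 0\end{smallmatrix}\right] \in A_0(K_{2n}, M_{2n}(V))_{sa}$: its $\Vert \cdot \Vert_{\infty, 2n}$-norm equals $\sup$ of its modulus over $K_{2n}$, which is by definition $\Vert a \Vert_{\infty, n}$, and one checks the $*$-structure matches using $\left[\begin{smallmatrix} 0 & a \\ a^* & 0\end{smallmatrix}\right]^* = \left[\begin{smallmatrix} 0 & a \\ a^* & 0\end{smallmatrix}\right]$.
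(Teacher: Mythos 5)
Your proposal is correct and follows essentially the same route as the paper: the lower bound via embedding $\begin{bmatrix} v/2 & v/2 \\ v/2 & v/2\end{bmatrix}$ into $K_{2n}$ using ${\bf L_1}$, the upper bound via ${\bf L_2}$ and the identity $2\,\mathrm{Re}\,\tilde a(u_{12}) = \tilde a(u_{12}+u_{12}^*)$ evaluated on $\co(K_n\cup -K_n)$, and the ``in particular'' clause by applying the first part to the self-adjoint element $\begin{bmatrix} 0 & a \\ a^* & 0\end{bmatrix}$. The constant you were worried about resolves to exactly $1$ (the factor $2$ is absorbed because ${\bf L_2}$ places $u_{12}+u_{12}^*$ itself, not its half, in $\co(K_n\cup -K_n)$), and the detour through $iu_{12}$ for the ``imaginary part'' is unnecessary since $a=a^*$ already forces $\mathrm{Re}\,\tilde a(u_{12})=\tfrac12\tilde a(u_{12}+u_{12}^*)$.
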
 
\begin{proof} 
	Put $r_{n}(a)=\sup\{\vert a(v)\vert: v\in K_{n}\}.$ Since $K_{2n}$ is a compact set, we have $\Vert a \Vert_{n} = \left\vert \begin{bmatrix} 0 & a \\ a & 0 \end{bmatrix}(v) \right\vert$ for some $v \in K_{2n}.$ Let $v = \begin{bmatrix} v_{11} & v_{12} \\ v_{12}^* & v_{22} \end{bmatrix}$. Since $\{ K_{n} \}$ is an $L^{1}$-matrix convex set, we have  $v_{12} + v_{12}^{*} \in \co(K_{n} \cup (-K_{n})).$ As $K_{n}$ is convex, there are $v, w \in K_{n}$ and $\lambda \in [0,1]$ such that $v_{12} + v_{12}^{*} = \lambda u - (1 - \lambda) w.$ Thus 
	\begin{align*}
	\Vert a\Vert_{\infty,n}&= \vert \tilde{a}(v_{12})+\tilde{a}(v_{12}^*)\vert=\vert \tilde{a}(v_{12}+v_{12}^*)\vert\\
	&=\vert \tilde{a}(\lambda u-(1-\lambda)w)\vert=\vert \lambda a(u)-(1-\lambda)a(w)\vert\\
	&\leq \lambda r_{n}(a)+(1-\lambda)r_{n}(a)= r_{n}(a)
	\end{align*} 
	Again as $K_{n}$ is a compact convex set, we have $r_{n}(a)=\vert a(v)\vert$ for
	some $v\in K_{n}.$ 
	Since $\{K_{n}\}$ is an $L^{1}$-matrix convex set, we have 
	$\begin{bmatrix}
	\frac{v}{2} & \frac{v}{2}\\
	\frac{v}{2}&\frac{v}{2}
	\end{bmatrix}\in K_{2n}.$
	Therefore,
	\begin{align*}
	r_{n}(a)=
	\left\vert
	\begin{bmatrix}
	0 & a\\
	a &0
	\end{bmatrix}
	\left(
	\begin{bmatrix}
	\frac{v}{2} & \frac{v}{2}\\
	\frac{v}{2}&\frac{v}{2}
	\end{bmatrix}\right)
	\right\vert \leq \Vert a\Vert_{\infty,n}.
	\end{align*}
\end{proof} 
\begin{corollary}
	For $a\leq b\leq c$ in $A_{0}(K_{n}, M_{n}(V))_{sa}$, we have 
	$$\Vert b \Vert_{\infty, n}\leq \max\{\Vert a\Vert_{\infty, n}, \Vert c\Vert_{\infty,n}\}.$$ 
\end{corollary}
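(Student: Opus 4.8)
The plan is to reduce everything to the self-adjoint evaluation formula of Lemma \ref{a4} and then run a one-variable argument over the reals. First I would unwind the hypothesis: by the definition of $A_{0}(K_{n}, M_{n}(V))^{+}$, the relation $a \leq b \leq c$ in $A_{0}(K_{n}, M_{n}(V))_{sa}$ means that $b - a$ and $c - b$ both lie in $A_{0}(K_{n}, M_{n}(V))^{+}$, i.e. $(b-a)(v) \geq 0$ and $(c-b)(v) \geq 0$ for every $v \in K_{n}$. Since $a, b, c$ are self-adjoint, the identity $a^{*}(u) = \overline{a(u)}$ on $K_{n}$ forces $a(v), b(v), c(v) \in \R$ for all $v \in K_{n}$, so these inequalities make sense and read $a(v) \leq b(v) \leq c(v)$.

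Next, fix $v \in K_{n}$ and split into cases on the sign of $b(v)$. If $b(v) \geq 0$, then $0 \leq b(v) \leq c(v)$ gives $\vert b(v) \vert = b(v) \leq c(v) \leq \vert c(v) \vert \leq \Vert c \Vert_{\infty, n}$, where the last step uses Lemma \ref{a4} to identify $\Vert c \Vert_{\infty, n}$ with $\sup\{\vert c(v) \vert : v \in K_{n}\}$. If $b(v) < 0$, then $a(v) \leq b(v) < 0$ gives $\vert b(v) \vert = -b(v) \leq -a(v) = \vert a(v) \vert \leq \Vert a \Vert_{\infty, n}$. In either case $\vert b(v) \vert \leq \max\{\Vert a \Vert_{\infty, n}, \Vert c \Vert_{\infty, n}\}$.

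Finally, taking the supremum over $v \in K_{n}$ and applying Lemma \ref{a4} once more, this time to $b$, yields $\Vert b \Vert_{\infty, n} = \sup\{\vert b(v) \vert : v \in K_{n}\} \leq \max\{\Vert a \Vert_{\infty, n}, \Vert c \Vert_{\infty, n}\}$, which is exactly the assertion. There is no genuine obstacle here; the only point that needs care is ensuring the evaluations are real-valued so that the two-sided estimate on $\vert b(v) \vert$ is legitimate, and this is guaranteed by self-adjointness together with the definition of the involution on $A_{0}(K_{n}, M_{n}(V))$.
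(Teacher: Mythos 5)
Your proof is correct and follows essentially the same route as the paper: both arguments reduce the statement to the pointwise inequality $a(v)\leq b(v)\leq c(v)$ on $K_{n}$, deduce $\vert b(v)\vert\leq \max\{\vert a(v)\vert,\vert c(v)\vert\}$, and then take the supremum using the self-adjoint evaluation formula of Lemma \ref{a4}. The only difference is that you spell out the sign-of-$b(v)$ case analysis and the reality of the evaluations, which the paper leaves implicit.
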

\begin{proof}
	Let $a\leq b\leq c$ in $A_{0}(K_{n}, M_{n}(V))_{sa}$. Then $a(u)\leq b(u)\leq c(u)$ for all $u\in K_{n}$ so that $\vert b(u)\vert\leq \max \{\vert a(u)\vert, \vert c(u)\vert \}.$ Thus by Lemma \ref{a4}, we get $\vert b(u)\vert\leq \max\{\Vert a\Vert_{\infty,n},\Vert b\Vert_{\infty,n} \}$ for all $u\in K_{n}$ so that $\Vert b\Vert_{\infty, n}\leq \max\{\Vert a\Vert_{\infty, n}, \Vert c\Vert_{\infty,n}\}.$
\end{proof} 

\begin{proposition}\label{a7} 
	Let $\{K_{n}\}$ be an $L^{1}$-matrix convex set in $V.$ Then $\{ \Vert\cdot\Vert_{\infty, n}\}$ satisfies the following conditions: 
	\begin{enumerate} 
		\item $\Vert a\oplus b\Vert_{\infty,m+n}=\max \{\Vert a\Vert_{\infty,m}, \Vert b\Vert_{\infty,n}\}$ for all $ a\in A_{0}(K_{m}, M_{m}(V))$ and $b\in A_{0}(K_{n}, M_{n}(V));$ 		
		\item $\Vert \alpha a \beta \Vert_{\infty,m} \leq \Vert \alpha \Vert \Vert a  \Vert_{\infty,n} \Vert \beta \Vert $ for all $a \in A_{0}(K_{n}, M_{n}(V)),\alpha \in \M_{m,n} ~\mbox{ and}~ \beta \in \M_{n,m}.$ 
	\end{enumerate} 
\end{proposition}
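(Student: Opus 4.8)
The plan is to verify the two conditions, proving condition (2) first since it yields the lower bound needed in condition (1). The main tool throughout is Lemma~\ref{a4}: for a self-adjoint $c\in A_{0}(K_{N},M_{N}(V))_{sa}$ it replaces the defining supremum over $K_{2N}$ by $\sup\{\vert c(v)\vert:v\in K_{N}\}$, and in particular $\Vert a\Vert_{\infty,n}=\left\Vert\begin{bmatrix}0&a\\a^{*}&0\end{bmatrix}\right\Vert_{\infty,2n}$ for every $a\in A_{0}(K_{n},M_{n}(V))$. I shall also use that for a self-adjoint $v=\begin{bmatrix}v_{11}&v_{12}\\v_{12}^{*}&v_{22}\end{bmatrix}$ (recall $K_{N}\subseteq M_{N}(V)_{sa}$) one has $\begin{bmatrix}0&c\\c^{*}&0\end{bmatrix}(v)=2\,\mathrm{Re}\,\widetilde{c}(v_{12})$, which is immediate from $\widetilde{c^{*}}(w)=\overline{\widetilde{c}(w^{*})}$.

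For condition (2) we may rescale so that $\Vert\alpha\Vert\le 1$ and $\Vert\beta\Vert\le 1$, and we must show $\Vert\alpha a\beta\Vert_{\infty,m}\le\Vert a\Vert_{\infty,n}$. Fix $u=\begin{bmatrix}u_{11}&u_{12}\\u_{12}^{*}&u_{22}\end{bmatrix}\in K_{2m}$. Since $(\alpha a\beta)^{*}=\beta^{*}a^{*}\alpha^{*}$ and $\widetilde{\alpha a\beta}(w)=\widetilde{a}(\alpha^{T}w\beta^{T})$, the observation above gives $\begin{bmatrix}0&\alpha a\beta\\(\alpha a\beta)^{*}&0\end{bmatrix}(u)=2\,\mathrm{Re}\,\widetilde{a}(\alpha^{T}u_{12}\beta^{T})$. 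Put $\gamma=\begin{bmatrix}\overline{\alpha}&0\\0&\beta^{T}\end{bmatrix}\in\M_{2m,2n}$; since entrywise conjugation preserves positivity and operator norm, $\gamma\gamma^{*}=\begin{bmatrix}\overline{\alpha\alpha^{*}}&0\\0&\overline{\beta^{*}\beta}\end{bmatrix}\le I_{2m}$, so by ${\bf L_{1}}$ (with a single matrix) $w:=\gamma^{*}u\gamma\in K_{2n}$; it is self-adjoint with $(1,2)$-block $\alpha^{T}u_{12}\beta^{T}$. Hence $\begin{bmatrix}0&\alpha a\beta\\(\alpha a\beta)^{*}&0\end{bmatrix}(u)=\begin{bmatrix}0&a\\a^{*}&0\end{bmatrix}(w)$, whose modulus is at most $\left\Vert\begin{bmatrix}0&a\\a^{*}&0\end{bmatrix}\right\Vert_{\infty,2n}=\Vert a\Vert_{\infty,n}$ by Lemma~\ref{a4}. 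Taking the supremum over $u\in K_{2m}$ proves (2).

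For condition (1), the bound $\max\{\Vert a\Vert_{\infty,m},\Vert b\Vert_{\infty,n}\}\le\Vert a\oplus b\Vert_{\infty,m+n}$ follows from (2), since $a=\begin{bmatrix}I_{m}&0\end{bmatrix}(a\oplus b)\begin{bmatrix}I_{m}\\0\end{bmatrix}$ and $b=\begin{bmatrix}0&I_{n}\end{bmatrix}(a\oplus b)\begin{bmatrix}0\\I_{n}\end{bmatrix}$ with all four matrices of norm $1$. For the reverse inequality, by Lemma~\ref{a4} applied to the self-adjoint $\begin{bmatrix}0&a\oplus b\\(a\oplus b)^{*}&0\end{bmatrix}$ it suffices to bound $\left\vert\begin{bmatrix}0&a\oplus b\\(a\oplus b)^{*}&0\end{bmatrix}(u)\right\vert$ for $u\in K_{2(m+n)}$. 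Regard $u$ as a $4\times4$ block matrix with block sizes $m,n,m,n$; expanding the action of $\oplus$ and using the observation above gives $\begin{bmatrix}0&a\oplus b\\(a\oplus b)^{*}&0\end{bmatrix}(u)=2\,\mathrm{Re}\,\widetilde{a}(u^{(13)})+2\,\mathrm{Re}\,\widetilde{b}(u^{(24)})$, where $u^{(13)},u^{(24)}$ are the $(1,3)$- and $(2,4)$-blocks of $u$. Applying ${\bf L_{1}}$ to the two complementary coordinate projections onto the block sets $\{1,3\}$ and $\{2,4\}$, which sum to $I_{2(m+n)}$, produces the compressions $u_{a}\in K_{2m}$ and $u_{b}\in K_{2n}$ of $u$, whose $(1,2)$-blocks are $u^{(13)}$ and $u^{(24)}$ respectively, and moreover $u_{a}\oplus u_{b}\in K_{2(m+n)}$. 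If $u_{a}=0$ the first term of the sum already vanishes, and similarly for $u_{b}$; otherwise, by Proposition~\ref{c4}, write $u_{a}=t_{a}\widehat{u_{a}}$ and $u_{b}=t_{b}\widehat{u_{b}}$ with $t_{a},t_{b}\in(0,1]$, $\widehat{u_{a}}\in\lead(K_{2m})$, $\widehat{u_{b}}\in\lead(K_{2n})$. Applying ${\bf L_{3}}$ to $u_{a}\oplus u_{b}$, whose off-diagonal block is $0$, gives $t_{a}+t_{b}\le 1$. Since $\left\vert 2\,\mathrm{Re}\,\widetilde{a}(u^{(13)})\right\vert=t_{a}\left\vert\begin{bmatrix}0&a\\a^{*}&0\end{bmatrix}(\widehat{u_{a}})\right\vert\le t_{a}\Vert a\Vert_{\infty,m}$ by Lemma~\ref{a4}, and likewise for $b$, we conclude $\left\vert\begin{bmatrix}0&a\oplus b\\(a\oplus b)^{*}&0\end{bmatrix}(u)\right\vert\le t_{a}\Vert a\Vert_{\infty,m}+t_{b}\Vert b\Vert_{\infty,n}\le(t_{a}+t_{b})\max\{\Vert a\Vert_{\infty,m},\Vert b\Vert_{\infty,n}\}\le\max\{\Vert a\Vert_{\infty,m},\Vert b\Vert_{\infty,n}\}$, which gives (1).

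The main obstacle is the upper bound in (1): a direct block estimate only produces the sum $\Vert a\Vert_{\infty,m}+\Vert b\Vert_{\infty,n}$, and one must use the $L^{1}$-character of $\{K_{n}\}$ to sharpen this to the maximum. The crucial move is to extract from an arbitrary $u\in K_{2(m+n)}$ the two compressions $u_{a},u_{b}$ which, by ${\bf L_{1}}$, assemble into a diagonal element $u_{a}\oplus u_{b}\in K_{2(m+n)}$, so that ${\bf L_{3}}$ forces the scale factors $t_{a},t_{b}$ of $u_{a},u_{b}$ to obey $t_{a}+t_{b}\le 1$; this is precisely what turns the sum into a maximum. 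Keeping the $4\times4$ block bookkeeping aligned with the hypotheses of ${\bf L_{1}}$ and ${\bf L_{3}}$ is the part requiring care; the remaining steps are routine manipulations with the algebraic operations on $A_{0}(K_{n},M_{n}(V))$ together with Lemma~\ref{a4}.
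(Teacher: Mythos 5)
Your proof is correct, and while it uses the same three ingredients as the paper (Lemma~\ref{a4}, axiom ${\bf L_1}$ to form compressions, and axiom ${\bf L_3}$ to convert a sum of scale factors into a maximum), it deploys them along a genuinely different route. For the upper bound in (1), the paper first proves the identity for \emph{self-adjoint} $a,b$ by reducing to a supremum over $K_{m+n}$ via Lemma~\ref{a4} and applying ${\bf L_3}$ there, and then handles general $a,b$ by conjugating $\begin{bmatrix}0& a\oplus b\\ a^*\oplus b^*&0\end{bmatrix}$ with an explicit permutation unitary to turn it into $\begin{bmatrix}0&a\\a^*&0\end{bmatrix}\oplus\begin{bmatrix}0&b\\b^*&0\end{bmatrix}$; you instead work directly with an arbitrary $u\in K_{2(m+n)}$, use ${\bf L_1}$ with the two complementary block projections to extract $u_a\in K_{2m}$, $u_b\in K_{2n}$ together with $u_a\oplus u_b\in K_{2(m+n)}$, and apply ${\bf L_3}$ at level $2m+2n$ — this absorbs the paper's unitary-shuffle step into the block bookkeeping and handles non-self-adjoint $a,b$ in one pass. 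For (2), the paper conjugates $\begin{bmatrix}0&a\\a^*&0\end{bmatrix}$ by $\mathrm{diag}(t\alpha,\tfrac1t\beta^*)$ and optimizes over $t$, whereas you build a single contraction $\gamma=\begin{bmatrix}\overline{\alpha}&0\\0&\beta^T\end{bmatrix}$ (after normalizing $\Vert\alpha\Vert,\Vert\beta\Vert\le1$) and feed $\gamma^*u\gamma\in K_{2n}$ straight into the definition of $\Vert a\Vert_{\infty,n}$, avoiding the $t$-optimization. Both arguments are sound; yours is somewhat more direct, at the cost of heavier $4\times4$ block bookkeeping, while the paper's factorization through the self-adjoint case isolates reusable intermediate facts (its Steps I and II). Two cosmetic remarks: the final bounds $\left\vert\begin{bmatrix}0&a\\a^*&0\end{bmatrix}(w)\right\vert\le\Vert a\Vert_{\infty,n}$ for $w\in K_{2n}$ follow from the definition of $\Vert\cdot\Vert_{\infty,n}$ rather than from Lemma~\ref{a4}; and in the degenerate case $u_a=0$ (or $u_b=0$) you should note explicitly that the surviving term is still bounded by the maximum, which is immediate since $u_b\in K_{2n}$.
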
 

\begin{proof} 
	We shall prove this result in several steps.
	
	\vskip 2mm
	{\bf Step I.} $\Vert a\oplus b\Vert_{\infty,m+n}=\max \{\Vert a\Vert_{\infty,m}, \Vert b \Vert_{\infty,n}\}$ for all $ a\in A_{0}(K_{m}, M_{m}(V))_{sa}$ and $b \in
	A_{0}(K_{n}, M_{n}(V))_{sa}$. 
	
	\vskip 2mm 
	Let $a\in A_{0}(K_{m}, M_{m}(V))_{sa}$ and $b\in A _{0}(K_{n}, M_{n}(V))_{sa}.$ Now for every $v\in K_{m},$ we have 
	$$\vert a(v)\vert=\vert (a\oplus b)(v\oplus 0)\vert.$$ 
	Since $\{K_{n}\}$ is an $L^{1}$-matrix convex set, we have $v\oplus 0\in 
	K_{m+n}$ whenever $v\in K_{m}.$  Therefore from Proposition \ref{a4}, we conclude that  $\Vert a \Vert_{\infty,m}\leq  \Vert a\oplus b\Vert_{\infty,m+n}.$ Similarly, we can show that  $\Vert b \Vert_{\infty,n}\leq \Vert a\oplus b\Vert_{\infty,m+n}.$ 
	
	Conversely, let $v=\begin{bmatrix} v_{11}& v_{12}\\ v_{12}^* &v_{22} \end{bmatrix}\in K_{m+n}.$ Then there exist $\widehat{v_{11}}\in \lead(K_{m}), \widehat{v_{22}}\in \lead(K_{n})$ and $\alpha_{1},\alpha_{2}\in [0,1]$ with $\alpha_{1}+\alpha_{2}\leq 1$ such that $v_{11}=\alpha_{1}\widehat{v_{11}}, v_{22}=\alpha_{2}\widehat{v_{22}}.$ Thus 
	\begin{align*} 
	\vert (a\oplus b)(v)\vert&=\vert a( v_{11})+ b(v_{22})\vert\\ &=\vert\alpha_{1}a(\widehat{v_{11}})+\alpha_{2}b(\widehat{v_{22}})\vert\\ 
	&\leq \alpha_{1} \Vert a \Vert_{\infty, m}+ \alpha_{2}\Vert b\Vert_{\infty,n}\\ 
	&\leq \max\{\Vert a\Vert_{\infty,m}, \Vert b\Vert_{\infty,n}\}. 
	\end{align*} 
	Therefore $\Vert a\oplus b\Vert_{\infty,m+n}=\max\{\Vert a\Vert_{\infty,m} 
	\Vert b\Vert_{\infty,n}\}.$
	
	\vskip 2mm
	{\bf Step II.} For $a\in A_{0}(K_{n}, M_{n}(V))_{sa}$ and $\alpha \in \M_{m,n}$, we have 
	$\Vert \alpha^* a \alpha \Vert_{\infty,n} \leq \Vert \alpha \Vert^{2} \Vert a \Vert_{\infty,n}.$
	
	\vskip 2mm
	Let $a\in A_{0}(K_{m}, M_{m}(V))_{sa}$ and $\alpha\in \M_{m,n}$ such that $\Vert
	\alpha\Vert\leq 1$ and let $v\in K_{n}.$ Since $\{K_{n}\}$ is an $L^{1}$-matrix convex set and $\alpha^{*T}\alpha^{T}\leq I_{m},$ 
	we have 
	$\alpha^{T^{*}} v\alpha^T\in K_{m}.$ Also we know that
	\begin{align*}
	\vert (\alpha^{*} a\alpha)(v)\vert&=\vert a(\alpha^{T^*}v\alpha^{T})\vert.
	\end{align*}
	Since $a$ is self-adjoint, by Proposition \ref{a4},
	we have $\Vert\alpha^{*} a\alpha\Vert_{\infty,n}\leq\Vert a\Vert_{\infty,n}$ for $a=a^{*}.$ 
	In particular, if $m = n$ and if $\alpha \in \M_m$ is unitary, then $\Vert \alpha^{*} a \alpha
	\Vert_{\infty,m} = \Vert a \Vert_{\infty,m}.$
	Also, in general, for $a\in A_{0}(K_{n}, M_{n}(V))_{sa}$ and $\alpha \in \M_{m,n}$, we have 
	$$\Vert\alpha^*a\alpha\Vert_{\infty,n}\leq \Vert\alpha\Vert^{2}\Vert
	a\Vert_{\infty,n}.$$
	
	Now we are ready to prove $(1)$ and $(2).$
	\begin{enumerate}
		\item Let $a\in A_{0}(K_{m}, M_{m}(V))$ and $b\in A_{0}(K_{n}, M_{n}(V)).$ Put $\gamma =  \begin{bmatrix}
		I_{m}   &0     &0      &0    \\
		0       &0    &I_{n} &0       \\
		0       &I_{m}&0      &0        \\
		0       &0     &0       &I_{n}
		\end{bmatrix}$. Then $\gamma \in M_{2m + 2n}$ is a unitary and 
		\begin{align*}
		\gamma^* 
		\begin{bmatrix}
		0 & a\oplus b\\
		a^* \oplus b^* &0
		\end{bmatrix} \gamma =  
		\begin{bmatrix}
		0 & a  \\
		a^{*} & 0 \end{bmatrix} \oplus \begin{bmatrix} 0 & b \\ b^* & 0 \end{bmatrix}
		\end{align*} 
		so that 
		\begin{align*}
		\left\Vert \begin{bmatrix}
		0 & a\oplus b\\
		a^* \oplus b^* &0
		\end{bmatrix}  \right\Vert_{\infty,2(m+n)} 
		= \left\Vert \begin{bmatrix}
		0 & a  \\
		a^{*} & 0 \end{bmatrix} \oplus \begin{bmatrix} 0 & b \\ b^* & 0 \end{bmatrix} \right\Vert_{\infty,2m+2n}
		\end{align*} 
		by Step II. Thus by Lemma \ref{a4}, we have
		\begin{align*}
		\Vert a\oplus b\Vert_{m+n} 
		&= 
		\left \Vert
		\begin{bmatrix}
		0               & a\oplus b\\
		a^*\oplus b^* &0
		\end{bmatrix}\right\Vert_{\infty,2(m+n)} \\
		&= \left\Vert \begin{bmatrix}
		0 & a  \\
		a^{*} & 0 \end{bmatrix} \oplus \begin{bmatrix} 0 & b \\ b^* & 0 \end{bmatrix} \right\Vert_{\infty,2m+2n} \\
		&= \max\left\{
		\left\Vert \begin{bmatrix}
		0 &a\\
		a^* &0
		\end{bmatrix}
		\right\Vert_{\infty,2m}, 
		\left\Vert\begin{bmatrix}
		0    &b\\
		b^* &0
		\end{bmatrix}
		\right\Vert_{\infty,2n}
		\right\} \notag \\ 
		&= 
		\max \{ \Vert a \Vert_{\infty,m}, \Vert b \Vert_{\infty,n} \}.
		\end{align*} 
		\item Let $\alpha\in \M_{m,n}, a\in A_{0}(K_{n}, M_{n}(V)) \mbox{ and }\beta\in \M_{n,m}.$
		Then by Lemma \ref{a4}, we have 
		\begin{align*}
		\Vert \alpha a\beta\Vert_{\infty,m}=&
		\left\vert\left\vert
		\begin{bmatrix}
		0              &\alpha a\beta\\
		\beta^*a^*\alpha^* &          0
		\end{bmatrix}
		\right\vert\right\vert_{\infty,2m}
		\end{align*}
		For $t\in \R^+\setminus \{0\},$ we have 
		\begin{align*}
		&\begin{bmatrix}
		t \alpha      &           0            \\
		0        &  \frac{1}{t}\beta^*
		\end{bmatrix}
		\begin{bmatrix}
		0           &a\\
		a^*          &   0
		\end{bmatrix}
		\begin{bmatrix}
		t\alpha^*            &0                    \\
		0              &\frac{1}{t}\beta
		\end{bmatrix}
		=
		\begin{bmatrix}
		0              &\alpha a\beta\\
		\beta^* a^*\alpha^* &          0
		\end{bmatrix}.
		\end{align*}
		
		Thus,
		\begin{align*}
		\Vert\alpha a \beta \Vert_{\infty,m}
		&\leq
		\left\Vert
		\begin{bmatrix}
		t \alpha      &           0            \\
		0       &  \frac{1}{t}\beta^*
		\end{bmatrix}
		\right\Vert
		\left\Vert
		\begin{bmatrix}
		0           & a\\
		a^*          &   0
		\end{bmatrix}
		\right\Vert_{\infty,2n}
		\left\Vert
		\begin{bmatrix}
		t\alpha            &0                    \\
		0              &\frac{1}{t}\beta
		\end{bmatrix}
		\right\Vert \\
		&\leq \max\{\Vert t^{2}\alpha\alpha^*\Vert,\Vert
		\frac{1}{t^2}^{2}\alpha\alpha^*\Vert \}\Vert a \Vert_{\infty,n} \\
		&\leq \max\{t^2 \Vert\alpha\Vert^2,\frac{1}{t^2} \Vert\beta\Vert^2\} \Vert a \Vert_{\infty,n}.
		\end{align*}
		Taking infimum over $t\in \R^{+}\setminus\{0\},$ we may conclude that
		$\Vert\alpha a\beta\Vert_{\infty,m}\leq \Vert\alpha\Vert \Vert
		a\Vert_{\infty,n}\Vert\beta\Vert.$ 
	\end{enumerate}
\end{proof} 
Finally, for each $n \in \N$. we define $\Phi_n: M_{n}(A_{0}(K_{1}, V)) \to A_{0}(K_{n},M_{n}(V))$ as follows: Let $a_{ij} \in A_{0}(K_{1}, V)$ for $1 \le i, j \le n$. Define 
$$\Phi_n([a_{ij}]): K_n \to \C ~ \textrm{given by} ~ \Phi_n([a_{ij}])([v_{ij}]) = \sum_{i, j = 1}^n \widetilde{a_{ij}}(v_{ij}) ~ \textrm{for all} ~ [v_{ij}] \in K_n.$$ 
Now, it is routine to show that $\Phi_n([a_{ij}]) \in A_{0}(K_{n},M_{n}(V))$. (Note that $\Phi_n$ is an amplification of $\Phi_1$. That is, $\Phi_n([a_{ij}]) = [\Phi_1(a_{ij})]$, if $[a_{ij}] \in M_n(A_{0}(K_{1}, V))$.) Under this identification, we note that $[a_{i,j}]^* = [a_{j,i}^*]$ is an involution in $M_{n}(A_{0}(K_{1}, V))$ so that $\Phi_1$ is a $*$-isomorphism. 

For each $n \in\N$, we set 
$$M_{n}(A_{0}(K_{1}, V))^+ := \left\{ [a_{ij}] \in M_{n}(A_{0}(K_{1}, V))_{sa}: \sum_{i,j=1}^{n} \widetilde{a_{i,j}}(v_{i,j})\geq 0 ~\textrm{for all} ~ [v_{i,j}] \in K_{n} \right\}$$ 
and transport the norm 
$$  \Vert [a_{i,j}]\Vert_{n} := \Vert
\Phi_{n}([a_{i,j}])
\Vert_{\infty,n}$$
for all $[a_{i,j}]\in M_{n}(A_{0}(K_{1}, V)).$ Now, the next result is an assimilation of the observations made in this section.
\begin{theorem}\label{a9} 
	$\left( A_{0}(K_{1}, V), \{ M_{n}(A_{0}(K_{1}, V))^+ \}, \{ \Vert\cdot\Vert_n \} \right)$ is a C$^*$-ordered operator space. 
\end{theorem}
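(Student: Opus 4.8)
The plan is to deduce each clause of the definition of a C$^*$-ordered operator space from the material assembled in this section, once we check that the maps $\Phi_n$ (equivalently, the identification $M_n(A_0(K_1,V)) \cong A_0(K_n, M_n(V))$) transport all the relevant structure faithfully. First I would verify that each $\Phi_n$ is a linear bijection. Surjectivity: given $a \in A_0(K_n, M_n(V))$ with extension $\tilde a\colon M_n(V) \to \C$, set $a_{ij} := \tilde a \circ \varepsilon_{ij}$, where $\varepsilon_{ij}\colon V \to M_n(V)$ is the continuous linear map placing its argument in the $(i,j)$-entry; then $a_{ij}|_{K_1} \in A_0(K_1,V)$ and $\Phi_n([a_{ij}]) = a$, since $\sum_{i,j}\varepsilon_{ij}(v_{ij}) = [v_{ij}]$. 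Injectivity follows because $K_n$ spans $M_n(V)$. One then checks by direct computation that $\Phi_n$ intertwines the involution $[a_{ij}]^* = [a_{ji}^*]$ with $a \mapsto a^*$, the matrix direct sum with the operation $a \oplus b$ defined above, and the action $[a_{ij}] \mapsto \alpha[a_{ij}]\beta$ with the map $\alpha a \beta$ defined above; by construction it is isometric for the transported norms and carries $M_n(A_0(K_1,V))^+$ onto $A_0(K_n, M_n(V))^+$. Thus every statement about the concrete family $\{A_0(K_n, M_n(V))\}$ may be read back on $\{M_n(A_0(K_1,V))\}$.

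Granting this, the $L^\infty$-matricial norm axioms for $A_0(K_1,V)$ are precisely the content of Proposition \ref{a7} (the claim that each $\Vert\cdot\Vert_{\infty,n}$ is a genuine norm was verified immediately after its definition), so $A_0(K_1,V)$ is an abstract operator space. Properness of the cone is immediate: if both $a$ and $-a$ lie in $A_0(K_n, M_n(V))^+$ then $a$ vanishes on $K_n$, hence $\tilde a = 0$ because $K_n$ spans $M_n(V)$, hence $a = 0$; in particular $A_0(K_1,V)^+$ is proper. That $*$ acts isometrically on each $M_n(A_0(K_1,V))$ is the identity $\Vert a^*\Vert_{\infty,n} = \Vert a\Vert_{\infty,n}$ recorded above.

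It remains to check the two order--norm conditions. For closedness of $M_n(A_0(K_1,V))^+$: since $*$ is isometric, hence continuous, $A_0(K_n, M_n(V))_{sa}$ is the kernel of the continuous map $a \mapsto a - a^*$ and so is closed; and for each fixed $v \in K_n$, Lemma \ref{a4} yields $\vert a(v)\vert \le \Vert a\Vert_{\infty,n}$ for self-adjoint $a$, so $a \mapsto a(v)$ is continuous on $A_0(K_n, M_n(V))_{sa}$ and therefore $A_0(K_n, M_n(V))^+ = \bigcap_{v \in K_n}\{a \in A_0(K_n, M_n(V))_{sa} : a(v) \ge 0\}$ is closed. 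The final estimate, $\Vert b\Vert_{\infty,n} \le \max\{\Vert a\Vert_{\infty,n}, \Vert c\Vert_{\infty,n}\}$ whenever $a \le b \le c$ in $A_0(K_n, M_n(V))_{sa}$, is exactly the Corollary to Lemma \ref{a4}. Transporting these facts through $\Phi_n$ yields all three conditions in the definition, completing the proof. I do not anticipate a genuine obstacle here: the analytic weight is already carried by Lemma \ref{a4}, its Corollary, and Proposition \ref{a7}, and the only point demanding real care is the routine but slightly tedious bookkeeping showing that $\Phi_n$ respects the algebraic operations and the order, so that the transport is justified.
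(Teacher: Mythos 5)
Your proposal is correct and takes essentially the same route as the paper, which gives no separate argument beyond declaring the theorem ``an assimilation of the observations made in this section'' (the norm and involution facts, Lemma~\ref{a4} and its corollary, and Proposition~\ref{a7}); your write-up is in fact more explicit, since you also supply the properness and closedness checks and the verification that $\Phi_n$ is a bijection intertwining all the structure. The only observation you leave uncited is Lemma~\ref{a2}, which provides the compatibility $\alpha^* A_0(K_m,M_m(V))^+\alpha \subseteq A_0(K_n,M_n(V))^+$ needed for the transported cones to form a matrix ordering, but this is covered in spirit by your blanket transport claim.
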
 
\begin{remark} 
	Let $\{K_{n}\}$ be an $L^{1}$-matrix convex set of $V$. Then by \cite[Theorem 1.7]{KARN11} that there is a complete order isometry $\phi:A_{0}(K_{1},V)\mapsto \A$ for some C$^{\ast}$-algebra $\A.$     
\end{remark}

\section{Completely Regularity}
In this section, we shall find conditions on an $L^{1}$-matrix convex set $\{ K_{n} \}$ in a given $*$-locally convex space $V$ such that $\left( A_{0}(K_{1}, V), \{ M_{n}(A_{0}(K_{1}, V))^+ \}, \{ \Vert\cdot\Vert_n \} \right)$ becomes a matrix order unit space. Let $E$ be a real locally convex space, and $M$ be a compact convex set in $E.$ Then $M$ is said to be \emph{regularly embedded} in $E$ if the following conditions hold:
\begin{enumerate}
	\item $M$ spans $E;$
	\item there exists a hyperplane $H$ containing $M$ such that $0\notin H;$
	\item canonical embedding $x\mapsto \chi(x),$ mapping $E$ to $A(M)^*_{w^*}$ is a topological isomorphism. \cite[ Chapter II.2]{ALF71}
\end{enumerate}
We propose a matricial version of regular embedding of an $L^{1}$-matrix convex set $\{ K_{n} \}$ in a given $*$-locally convex space $V$. Let $L_n$ be the lead of $K_n$ for each $n$. We shall call $\{ L_n \}$ the \emph{matricial lead} of $\{ K_{n} \}$. We also assume that $M_{n}(V)^{+}= \cup_{r=1}^{\infty }r K_{n}$ is a cone in $M_n(V)_{sa}$ for all $n$ (so that $(V, \{ M_n(V)^+\})$ is a matrix ordered space, by ${\bf L_1}$) such that $V^+$ is proper and generating. First, we consider the following notion.

\begin{definition}
	Let $\{K_{n}\}$ be an $L^{1}$-matrix convex set with its matricial lead $\{L_{n}\}$. We shall call $\{ K_n \}$ an \emph{$L^1$-matricial cap} of $V$ if
	\begin{enumerate}
		\item $L_{1}$ is convex; and 
		\item  if $v\in L_{m+n}$ with 
		$v=\begin{bmatrix}
		v_{11}& v_{12} \\
		v_{12}^*& v_{22} 
		\end{bmatrix}$ 
		for some $v_{11}\in K_{m}, v_{22}\in K_{n}$ and $v_{12}\in M_{m,n}(V)$ so that
		$v_{11}=\alpha_{1}\widehat{v_{1}}, v_{22}=\alpha_{2}\widehat{v_{2}}$ for some
		$\widehat{v_{1}}\in L_{m}, \widehat{v_{2}}\in L_{n}$ and $\alpha_{1},\alpha_{2}\in [0,1],$ then
		$\alpha_{1}+\alpha_{2}= 1.$
	\end{enumerate}
\end{definition} 
\begin{theorem}
	Let $\{ K_n \}$ be an $L^1$-matricial cap of $V$. Then $L_n$ is convex for every $n$.
\end{theorem}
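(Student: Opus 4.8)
The plan is to prove the statement by induction on $n$; the case $n=1$ is precisely condition (1) in the definition of an $L^1$-matricial cap. For the inductive step I fix $n\ge 2$, assume that $L_1$ and $L_{n-1}$ are convex, and deduce that $L_n$ is convex. So let $u,w\in L_n$ and $\lambda\in(0,1)$. Since $K_n$ is convex, $p:=\lambda u+(1-\lambda)w\in K_n$, and I want $p\in L_n$. I argue by contradiction. First $p\ne 0$: otherwise, as $0\in\partial_e(K_n)$, we would get $u=w=0$, contradicting $u,w\in L_n\subseteq K_n\setminus\{0\}$. Hence Proposition \ref{c4} gives a unique $\mu\in(0,1]$ and $q\in L_n$ with $p=\mu q$, and I suppose for contradiction that $\mu<1$.

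The idea is to split everything into $1\oplus(n-1)$ blocks and use the \emph{equality} $\alpha_1+\alpha_2=1$ furnished by condition (2) (the improvement over ${\bf L_3}$, which gives only $\le 1$). Compressing onto the first coordinate and onto the last $n-1$ coordinates, ${\bf L_1}$ shows that the diagonal blocks of any element of $K_n$ lie in $K_1$ and $K_{n-1}$ respectively. Write $u=\begin{bmatrix}u_{11}&u_{12}\\u_{12}^*&u_{22}\end{bmatrix}$ with $u_{11}\in K_1$, $u_{22}\in K_{n-1}$. By Proposition \ref{c4} there are $\widehat{u_{11}}\in L_1$, $\widehat{u_{22}}\in L_{n-1}$ and $\beta_1,\beta_2\in[0,1]$ with $u_{11}=\beta_1\widehat{u_{11}}$, $u_{22}=\beta_2\widehat{u_{22}}$; since $u\in L_n$, condition (2) forces $\beta_1+\beta_2=1$. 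Likewise $w_{11}=\delta_1\widehat{w_{11}}$, $w_{22}=\delta_2\widehat{w_{22}}$ with $\widehat{w_{11}}\in L_1$, $\widehat{w_{22}}\in L_{n-1}$ and $\delta_1+\delta_2=1$.

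Next I assemble the blocks of $p=\mu q$. Its top-left block is $p_{11}=\lambda\beta_1\widehat{u_{11}}+(1-\lambda)\delta_1\widehat{w_{11}}$; putting $c_1:=\lambda\beta_1+(1-\lambda)\delta_1\le 1$ and using that $L_1$ is convex, $p_{11}=c_1\widehat{p_{11}}$ for some $\widehat{p_{11}}\in L_1$. Similarly, using the inductive hypothesis that $L_{n-1}$ is convex, $p_{22}=c_2\widehat{p_{22}}$ with $c_2:=\lambda\beta_2+(1-\lambda)\delta_2\le 1$ and $\widehat{p_{22}}\in L_{n-1}$; moreover $c_1+c_2=\lambda(\beta_1+\beta_2)+(1-\lambda)(\delta_1+\delta_2)=1$. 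On the other hand, $p=\mu q$ gives $q_{11}=(c_1/\mu)\widehat{p_{11}}$ and $q_{22}=(c_2/\mu)\widehat{p_{22}}$; since $q_{11}\in K_1$, $q_{22}\in K_{n-1}$ and a lead point of a compact convex set cannot be dilated by a factor $>1$ and remain in the set, we must have $c_1/\mu\le 1$ and $c_2/\mu\le 1$. Applying condition (2) to $q\in L_n$ with this decomposition yields $c_1/\mu+c_2/\mu=1$, that is $\mu=c_1+c_2=1$, contradicting $\mu<1$. Therefore $p\in L_n$, and the induction is complete.

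The only fiddly points are the degenerate cases in which a diagonal block vanishes, so the corresponding $\beta$ or $\delta$ is $0$ and the lead vector can be chosen freely; these are harmless, since condition (2) (a hypothesis) already forbids both diagonal blocks of a lead point from vanishing simultaneously. The conceptual heart of the argument is the single observation that makes it run: condition (2) imposes a "conservation law" $\alpha_1+\alpha_2=1$ along the block diagonal, and once this is combined with $p=\mu q$ and the convexity of $L_1$ and $L_{n-1}$ there is no room left for the dilation factor $\mu$ to be strictly less than $1$.
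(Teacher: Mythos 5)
Your proof is correct, and it reorganizes the induction in a way that is genuinely leaner than the paper's. The core computation is the same in both arguments: split a convex combination $p=\lambda u+(1-\lambda)w$ of lead points into diagonal blocks, use the equality $\alpha_{1}+\alpha_{2}=1$ from condition (2) of the matricial cap together with convexity of the smaller leads to show that the block coefficients of $p$ sum to $1$, and then apply condition (2) once more to the lead point $q$ with $p=\mu q$ to force $\mu=1$. The difference is the induction scheme. The paper splits $2n = n\oplus n$, which only yields convexity of $L_{2^{k}}$ directly, and must then prove a separate descent step (Step II: $v\mapsto v\oplus 0$ maps $L_{m}$ into $L_{m+n}$, so convexity of $L_{m+n}$ pulls back to $L_{m}$) in order to reach a general index. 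Your $n=1\oplus(n-1)$ splitting handles every $n$ in a single induction and makes the descent lemma unnecessary for this theorem; what the paper's route buys in exchange is the embedding fact $L_{m}\oplus 0\subseteq L_{m+n}$, which is of independent use. You are also slightly more careful than the paper at the final step: before invoking condition (2) for $q$ you verify that the coefficients $c_{i}/\mu$ lie in $[0,1]$ (via the observation that a lead point cannot be dilated within $K$ by a factor greater than $1$), a hypothesis of condition (2) that the paper's Step I applies silently. Your treatment of the degenerate case of a vanishing diagonal block is likewise sound.
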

\begin{proof}
	We shall prove this result in several steps. 
	
	{\bf Step I. $L_{2}$ is convex.}
	
	Let $v= \begin{bmatrix} v_{11} & v_{12} \\ v_{12}^*& v_{22} \end{bmatrix}, w= \begin{bmatrix} w_{11} & w_{12} \\ w_{12}^*& w_{22} \end{bmatrix}\in L_{2}$ and let $\lambda \in [0,1]$. Then by $(2)$, we have  $v_{11}= \alpha_{1} \widehat{v_{1}}, v_{22} = \alpha_{2} \widehat{v_{2}}, \alpha_{1} + \alpha_{2} = 1$, for some $\widehat{v_{1}}, \widehat{v_{2}} \in L_{1}$, and $w_{11}= \beta_{1} \widehat{w_{1}}, w_{22} = \beta_{2} \widehat{w_{2}}, \beta_{1} + \beta_{2} = 1$, for some $\widehat{w_{1}}, \widehat{w_{2}} \in L_{1}$. Now 
	$$u := \lambda v + (1 - \lambda) w = \begin{bmatrix} \lambda v_{11} + (1-\lambda)w_{11} & \lambda v_{12} + (1-\lambda)w_{12} \\ \lambda v_{12}^* + (1-\lambda)w_{12}^* & \lambda v_{22} + (1-\lambda)w_{22} \end{bmatrix}\in K_{2}.$$ 
	Let $u= \begin{bmatrix}	u_{11} & u_{12} \\ u_{12}^* & u_{22} \end{bmatrix}$ so that $u_{11}=  \lambda v_{11} + (1-\lambda)w_{11} = \lambda \alpha_{1}\widehat{ v_{1}}+ (1-\lambda)\beta_{1}\widehat{w_{1}}$ and
	$u_{22}=  \lambda v_{22}+ (1-\lambda)w_{22} = \lambda \alpha_{2}\widehat{ v_{2}}+ (1-\lambda)\beta_{2}\widehat{w_{2}}$. %Then $u_{11}, u_{22}\in  K_{1}$.  
	Since $L_{1}$ is convex,
	$\widehat{u_{1}}=(\lambda \alpha_{1}+ (1-\lambda)\beta_{1})^{-1}u_{11}\in L_{1}$ and $\widehat{u_{2}}=(\lambda \alpha_{2}+ (1-\lambda)\beta_{2})^{-1}u_{22}\in L_{1}$. Put $(\lambda \alpha_{1}+ (1-\lambda)\beta_{1})
	=\gamma_{1}$ and $(\lambda \alpha_{2}+ (1-\lambda)\beta_{2})=\gamma_{2}$, then $u= \begin{bmatrix}
	\gamma_{1}\widehat{u_{1}}& u_{12}\\
	u_{12}^* &\gamma_{2}\widehat{ u_{2}}
	\end{bmatrix}$ and  $\gamma_{1}+\gamma_{2}=\lambda(\alpha_{1}+\alpha_{2})+(1-\lambda)(\beta_{1}+\beta_{2})=\lambda+(1-\lambda)=1$. Let $u=\gamma \widehat{u},$ where $\widehat{u}\in L_{2}$ and $\gamma\in [0,1]$. We show that $\gamma=1$. Let $\widehat{u}= \begin{bmatrix} x_{11} & x_{12} \\ x_{12}^* & x_{22} \end{bmatrix}$. Then $x_{11}, x_{22}\in K_{1}$ with $\gamma x_{11}= u_{11}, \gamma x_{22}= u_{22}$. Thus $x_{11} = \gamma^{-1} \gamma_1 \widehat{u_1}$ and $x_{22} = \gamma^{-1} \gamma_2 \widehat{u_2}$. Since $\{ K_n\}$ is an $L^1$-matricial cap, we get $1 = \gamma^{-1} \gamma_1 + \gamma^{-1} \gamma_2 = \gamma^{-1}$. Thus  $\gamma = 1$ and consequently, $u \in L_2$. Hence $L_2$ is convex.
	
	{\bf Now, by induction, $L_{2^n}$ is convex for every $n$.}
	
	{\bf Step II. For $m, n \in \N$, we have $L_{m}$ is convex if $L_{m+n}$ is convex.}
	
	First, we show that  $v\mapsto v\oplus 0$ maps $L_{m}$ into $L_{m+n}$. Let $v \in L_{m}$. Then $v\oplus 0\in K_{m+n}$ so that $v\oplus 0= \alpha \widehat{w}$ for some $\widehat{w}\in L_{m+n}$ and $\alpha\in [0,1]$. Thus 
	$$ v = \begin{bmatrix} I_{n}& 0_{n,m} \end{bmatrix}(v\oplus 0) \begin{bmatrix} I_{n}\\0_{m,n} \end{bmatrix} = \alpha  \begin{bmatrix} I_{n}& 0_{n,m} \end{bmatrix}\widehat{w} \begin{bmatrix} I_{n}\\0_{m,n} \end{bmatrix} =\alpha w_{1}$$ 
	where $w_{1}= \begin{bmatrix} I_{n}& 0_{n,m} \end{bmatrix}\widehat{w} \begin{bmatrix} I_{n}\\0_{m,n} \end{bmatrix}\in K_{m}.$ Now, as $L_2$ is the lead of $K_2$, we have $\alpha =1$ and $w_{1}= v$. Thus $v\oplus 0= \widehat{w}\in L_{m+n}$. 
	
	Now assume that $L_{m+n}$ is convex. Let $v, w\in L_{m}$ and $\alpha \in (0,1)$. Then 
	$$(\alpha v+ (1-\alpha) w)\oplus 0 = \alpha(v\oplus 0)+(1-\alpha)(w\oplus 0)  \in L_{m+n}.$$
	Put  $u=\alpha v\oplus (1-\alpha)w$.
	Then $u\in K_{m}$ so that $u=\lambda \widehat{u}$ for some $\widehat{u}\in L_{m}$ and $\lambda \in [0,1]$. As $ \widehat{u}\in L_{m}$, we get that $\widehat{u}\oplus{0}\in L_{m+n}$.
	Now $\lambda(\widehat{u}\oplus {0})= u\oplus 0\in L_{m+n}$ so that $\lambda =1$  and $u=\widehat{u}\in L_{m}$. Thus $L_{m}$ is convex. 
	
	Hence, by Step I, $L_n$ is convex for every $n$.
\end{proof} 

When $L_{1}$ is compact and convex, we denote by $A(L_{1})$ the set of all complex valued affine functions on $L_1$. Then $A(L_{1})_{sa}$ is an order unit space so that  
%the set of real valued continuous affine function on $L_{1}.$ Therefore 
$A(L_{1})_{sa}^{*},$ the ordered Banach dual of $A(L_{1})_{sa},$ is a base normed space \cite{JEM70, ALF71}. 
%In such case,  $A((L_{1})_{sa}^{*})_{w^*}$ is stand for locally convex space, the ordered Banach dual of $A(L_{1})_{sa}^*$ with respect to $w^*$-topology.

\begin{definition}
	Let $\{K_{n}\}$ be an $L^{1}$-matrix convex set in a $*$-locally convex space $V.$ Then $\{K_{n}\}$ is called \emph{regularly embedded} in $V$ if $L_1$ is regularly embedded in $V_{sa}$. In other words, 
	\begin{enumerate}
		\item $L_{1}$ is compact and convex; and 
		\item $\chi :V_{sa}\mapsto (A(L_{1})_{sa}^{*})_{w*}$
		is an linear homeomorphism.
	\end{enumerate}
	Here $\chi(w)(a)=\lambda a(u)-\mu a(v)$ for all  for all $a\in A(L_{1})_{sa}$ if $w = \lambda u - \mu v$ for some 
	$u, v \in L_1$ and $\lambda, \mu \in \R^+$. 
\end{definition}
We note that $\chi(w)$ is well defined. To see this, let $ w = \lambda_{1} u_{1}-\mu_{1} v_{1}=\lambda_{2} u_{2}-\mu_{2} v_{2}$ for some $u_{i}, v_{i}\in
L_{1}$ and $\lambda_{i}, \mu_{i}\in \R^{+}$ for $i=1,2.$ As $L_{1}$ is convex and 
$\frac{\lambda_{1}+\mu_{2}}{\lambda_{2}+\mu_{1}}\left(\frac{\lambda_{1}u_{1}+\mu_{2}v_{1} }{\lambda_{1}+
	\mu_{2}}\right)=\frac{\lambda_{2}u_{2}+\mu_{1}v_{1} }{\lambda_{2}+ \mu_{1}}$, by Proposition \ref{c4}, we
have $\lambda_{1}+ \mu_{2}= \lambda_{2}+ \mu_{1}.$ So if $a$ is an affine function on $L_{1},$ then
$\frac{\lambda_{1}a(u_{1})+\mu_{2}a(v_{2})}{\lambda_{1}+ \mu_{2}}=a(\frac{\lambda_{1}u_{1}+\mu_{2}v_{2}
}{\lambda_{1}+ \mu_{2}})=a(\frac{\lambda_{2}u_{2}+\mu_{1}v_{1} }{\lambda_{2}+
	\mu_{1}})=\frac{\lambda_{2}a(u_{2})+\mu_{1}a(v_{1})}{\lambda_{2}+ \mu_{1}}.$ Thus
$\lambda_{1}a(u_{1})-\mu_{1}a(v_{1})=\lambda_{2}a(u_{2})-\mu_{2}a(v_{2})$ so that
$\chi(w)$ is well defined linear functional on $A(L_{1})_{sa}$ for all $u,
v\in L_{n}$ and $\lambda , \mu\in \R^+$. % such that 

\begin{theorem} \label{mou}
	Let $\{ K_{n} \}$ be a regularly embedded, $L^{1}$-matricial cap in $V$. Then $A_{0}(K_{1}, V)$ has an order unit, say $e$ so that $(A_{0}(K_{1}, V), e)\}$ is a matrix order unit space. 
\end{theorem}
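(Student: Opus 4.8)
The strategy is to manufacture an order unit for $A_{0}(K_{1},V)$ out of the matricial lead $\{L_{n}\}$, and then to observe that the remaining defining properties of a matrix order unit space are already contained in Theorem \ref{a9}. I would begin at level one. Since $L_{1}$ is compact, convex and regularly embedded in $V_{sa}$, Proposition \ref{c4} lets me define $e\colon K_{1}\to\C$ by $e(0)=0$ and $e(v)=\gamma$ whenever $v=\gamma\widehat v$ with $\widehat v\in L_{1}$ and $\gamma\in(0,1]$. Convexity of $L_{1}$ (condition (1) of an $L^{1}$-matricial cap) forces $e$ to be affine: a convex combination $\lambda p+(1-\lambda)q$ of points of $K_{1}$, with $p=\gamma_{p}\widehat p$, $q=\gamma_{q}\widehat q$, scales down to a point of $L_{1}$ with coefficient $\mu=\lambda\gamma_{p}+(1-\lambda)\gamma_{q}$, and uniqueness in Proposition \ref{c4} identifies $\mu$ with $e(\lambda p+(1-\lambda)q)=\lambda e(p)+(1-\lambda)e(q)$. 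Writing a general $w\in V_{sa}$ as $\lambda u-\mu v$ with $u,v\in L_{1}$ (possible since $V^{+}=\bigcup_{r\ge 0}rL_{1}$ is generating), one checks that the linear extension of $e|_{K_{1}}$ is exactly $w\mapsto\chi(w)(\mathbf 1)$, where $\mathbf 1\in A(L_{1})_{sa}$ is the constant function; by the regular embedding hypothesis this is continuous on $V_{sa}$, so $e$ extends to a continuous linear functional on $V$ and $e\in A_{0}(K_{1},V)^{+}$. Finally every $a\in A_{0}(K_{1},V)_{sa}$ is bounded on the compact set $L_{1}$, say $|a|\le M$ there, and $a(\gamma\widehat v)=\gamma a(\widehat v)$ gives $|a(v)|\le M\,e(v)$ on $K_{1}$, i.e.\ $-Me\le a\le Me$; thus $e$ is an order unit for $A_{0}(K_{1},V)_{sa}$. (Equivalently, $a\mapsto a|_{L_{1}}$ is an order isomorphism of $A_{0}(K_{1},V)_{sa}$ onto the order unit space $A(L_{1})_{sa}$ carrying $e$ to $\mathbf 1$.)

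Next I would promote $e$ to a matrix order unit. Put $e_{n}:=e\oplus\cdots\oplus e$ ($n$ copies), the $n$-fold amplification, so that $\widetilde{e_{n}}(v)=\sum_{i=1}^{n}\widetilde e(v_{ii})$ for $v\in M_{n}(V)$. The heart of the matter is the claim that $\widetilde{e_{n}}\equiv 1$ on $L_{n}$, which I prove by induction on $n$, the case $n=1$ being the construction of $e$. Given $v\in L_{n}$, compress it into a $2\times2$ block matrix with a $1\times1$ corner $v_{11}\in K_{1}$ and an $(n-1)\times(n-1)$ corner $v'\in K_{n-1}$ (these compressions lie in $K_{1}$, $K_{n-1}$ by ${\bf L_{1}}$). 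Decomposing $v_{11}=\alpha_{1}\widehat v_{1}$, $v'=\alpha_{2}\widehat v'$ along their leads and applying condition (2) of the definition of an $L^{1}$-matricial cap gives $\alpha_{1}+\alpha_{2}=1$ (when $v_{11}=0$ one has $\alpha_{1}=0$, so the same condition forces $\alpha_{2}=1$, i.e.\ $v'\in L_{n-1}$). Since $\widetilde{e_{n}}=\widetilde e\oplus\widetilde{e_{n-1}}$, the inductive hypothesis applied to $L_{1}$ and to $L_{n-1}$ yields $\widetilde{e_{n}}(v)=\alpha_{1}+\alpha_{2}=1$. Consequently, by Proposition \ref{c4} applied to the compact convex set $K_{n}$, we get $\widetilde{e_{n}}(v)=\gamma\in[0,1]$ whenever $v=\gamma\widehat v$ with $\widehat v\in L_{n}$, so $\widetilde{e_{n}}$ is precisely the scaling functional of $K_{n}$. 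Now fix $a\in A_{0}(K_{n},M_{n}(V))_{sa}\cong M_{n}(A_{0}(K_{1},V))_{sa}$. By Lemma \ref{a4}, $M:=\Vert a\Vert_{\infty,n}=\sup\{|a(v)|:v\in K_{n}\}<\infty$, and for $v=\gamma\widehat v\in K_{n}$ we get $|a(v)|=\gamma|a(\widehat v)|\le\gamma M=M\,\widetilde{e_{n}}(v)$, whence $-Me_{n}\le a\le Me_{n}$. The reverse inequality (if $-te_{n}\le a\le te_{n}$ then $|a(v)|\le t\,\widetilde{e_{n}}(v)\le t$ on $K_{n}$) shows in addition that $\Vert a\Vert_{\infty,n}$ is the order unit norm determined by $e_{n}$; via Lemma \ref{a4} the same identity passes to non-self-adjoint elements, so the operator space norm on $A_{0}(K_{1},V)$ is the one canonically associated with the matrix order unit $e$.

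It then remains to invoke Theorem \ref{a9}: $A_{0}(K_{1},V)$ is a C$^{\ast}$-ordered operator space, so $A_{0}(K_{1},V)^{+}$ is proper and each cone $M_{n}(A_{0}(K_{1},V))^{+}$ is norm closed. Closedness together with the order unit $e_{n}$ gives the Archimedean property: if $-te_{n}\le a$ for all $t>0$, then $a+te_{n}\in M_{n}(A_{0}(K_{1},V))^{+}$ for every $t>0$ and $a+te_{n}\to a$ in norm, whence $a\ge 0$. Since $A_{0}(K_{1},V)$ is already a matrix ordered $\ast$-vector space, this completes the verification that $(A_{0}(K_{1},V),e)$ is a matrix order unit space. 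I expect the induction $\widetilde{e_{n}}\equiv 1$ on $L_{n}$ to be the main obstacle: it is exactly here that the $L^{1}$-matricial cap hypothesis — rather than bare $L^{1}$-matrix convexity, which only yields $\widetilde{e_{n}}\le 1$ on $L_{n}$ and would not make $e_{n}$ an order unit — together with the convexity of each $L_{n}$ established earlier, must be brought to bear, and the bookkeeping with block compressions and the degenerate case of a vanishing diagonal corner has to be handled carefully.
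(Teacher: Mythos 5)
Your proposal is correct and follows essentially the same route as the paper: define $e$ on $K_{1}$ via the lead decomposition of Proposition \ref{c4}, obtain continuity of $\tilde e$ from the regular embedding by evaluating $\chi$ at the constant function on $L_{1}$, show the amplification $e_{n}$ is identically $1$ on $L_{n}$ via the $L^{1}$-matricial cap condition, and deduce the order unit property from Lemma \ref{a4}. Your explicit induction reducing the $n$-block identity $\widetilde{e_{n}}\equiv 1$ on $L_{n}$ to the two-block cap condition, and your derivation of the Archimedean property from norm-closedness of the cones rather than from the function-space structure, are somewhat more careful renderings of steps the paper asserts directly, but not a different argument.
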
 
\begin{proof}
	As $L_{1}$ is the lead of $K_{1}$, there exists a mapping $e: K_{1}\setminus \{0\}\mapsto (0,1]$ given by $e(k)=\alpha$ if $k=\alpha\widehat{k}$ for some $\widehat{k}\in L_{1}$ and $\alpha\in (0,1]$. Since $\alpha$ and $\widehat{k}$ are uniquely determined by $k\in K_{1}\setminus\{0\}$, e is well defined. We extend $e$ to $K$ by putting $e(0)=0$. Since $L_{1}$ is convex, we may conclude that $e: K_{1}\mapsto [0,1]$ is affine. Again since $K_{1}$ spans $V$, we can extend $e$ to a self-adjoint linear functional  $\tilde{e}: V\mapsto \C$. Following this way, for each $n \in \N$, we can construct a self-adjoint linear functional $\widetilde{e_n}: M_n(V) \mapsto \C$ such that $\widetilde{e_n}(v) = 1$ for all $v \in L_n$. (We write $e_n$ for $\widetilde{e_n}|_{L_n}$.)
	
	We show that $\tilde{e}$ is continuous. It is suffices to show that $\tilde{e}|_{V_{sa}}$ is continuous at $0$. Let $\{ \lambda_{\alpha}u_{\alpha}-\mu_{\alpha} v_{\alpha} \}$ be a net in $V_{sa}$ for some $u_{\alpha},v_{\alpha} \in L_{1}$ and $\lambda_{\alpha},\mu_{\alpha}\in \R^+$ such that $\lambda_{\alpha}u_{\alpha}- \mu_{\alpha}v_{\alpha}\to 0.$ Since $\{K_{n}\}$ is $L^1$-regularly embedded in $V$, we get $\chi(\lambda_{\alpha}u_{\alpha}- \mu_{\alpha}v_{\alpha}) \to 0$ in $(A(L_{1})_{sa}^{*})_{w*}.$ Let $I_{L_{1}}$ be the constant map on $L_{1}$ such that $I_{L_{1}}(v)=1$ for all $v\in L_{1}.$ Then $I_{L_{1}} \in A(L_{1})_{sa}$. Thus $\chi(\lambda_{\alpha}u_{\alpha}- \mu_{\alpha}v_{\alpha})(I_{L_{1}}) \to 0$ so that $\tilde e(\lambda_{\alpha}u_{\alpha}-\mu_{\alpha}v_{\alpha})\to 0$. Now it follows that $e \in A_{0}(K_{1}, V).$ 
	
	Next, fix $n \in \N$ and consider $e^n \in M_{n}(A_{0}(K_{1}, V))$ so that by Theorem \ref{a9}, $e_0^n := \Phi_{n}\left( e^n \right) \in A_0(K_n, M_n(V)).$ We show that $e_0^n = e_n$. Let $v = [v_{i,j}]\in L_{n}$ so that $v_{i,i}\in K_{1}$ for $i = 1, \dots, n.$ Let $v_{ii} = \alpha_{i} \widehat{v_{_{i}}}$ for some $\alpha_{i}\in [0,1]$ and $\widehat{v_{i}}\in L_{n}.$ Since $\{K_{n}\}$ is $L^{1}$-matricial cap, we have $\sum_{i=1}^{n}\alpha_{i}=1.$ Thus 
	\begin{align*}
	e_0^n(v) = \sum_{i=1}^{n} e(v_{i,i}) = \sum_{i=1}^{n} \alpha_{i} e(\widehat{v_{i}}) =\sum_{i=1}^{n} \alpha_{i} = 1
	\end{align*}
	so that $e_0^n(v)=e_{n}(v)$ for all $v\in L_{n}$. Since $L_n$ is the lead of $K_n$ and since $K_n$ spans $M_n(V)$, it follows that $e_{n} = \Phi_{n}\left( e^n \right)$ for all $n \in \N$. 
	%and that $e_{n} \in A_{0}(K_{n}, M_{n}(V)).$
	
	Note that $\Vert e \Vert_{\infty, 1} = 1$. We show that $e$ is an order unit for $A_{0}(K_{1}, V)_{sa}$. 
	To see this, let $a\in A_{0}(K_{1}, V)_{sa}$. Then $\vert a(k)\vert\leq \Vert a\Vert_{\infty, 1}$ for all $k\in K_{1}$. Let $k\in K_{1}$. If $k=0$, then $a(0)=0$ so that 
	$$-\Vert a\Vert_{\infty, 1}e(0)=0=\Vert a\Vert_{\infty, 1}e(0).$$ 
	Let $k\neq 0$, then there exist a unique $\widehat{k}\in L_{1}$ and $\alpha\in (0,1]$ such that $k=\alpha \widehat{k}$. Now %$a(k)=\alpha a(\widehat{k})$ so that 
	$$- \Vert a \Vert_{\infty, 1} e(\hat{k}) = - \Vert a \Vert_{\infty, 1} \le  a(\hat{k}) \le \Vert a \Vert_{\infty, 1} = \Vert a \Vert_{\infty, 1} e(\hat{k}).$$ 
	so that
	$$
	- \Vert a\Vert_{\infty, 1} e(k)\leq a(k)\leq \Vert a\Vert_{\infty, 1}e(k)  
	$$
	for all $k\in K$.
	Thus we have  $-\Vert a\Vert_{\infty, 1}e \leq a \leq \Vert a \Vert_{\infty,1}e$
	for all $a\in A_{0}(K_{1}, V)_{sa}$. In other words, $e$ is an order unit for $A_{0}(K_{1}, V)_{sa}$ which determines $\Vert\cdot\Vert_{\infty, 1}$ as an order unit norm on it. 	Similarly, we can show that for each $n \in \N$, $e_n$ is an order unit for $A_{0}(K_{n}, M_n(V))_{sa}$ which determines $\Vert\cdot\Vert_{\infty, n}$ as an order unit norm on it. 	Again, being function space, $A_{0}(K_{n}, M_n(V))$ is Archimedean for every $n$. Hence $(A_{0}(K_{1}, V), e)$ is a matrix order unit space.
\end{proof} 
Next, we prove the completeness of $(A_{0}(K_{1}, V), e)$. 
\begin{proposition}\label{c1} 
	Let $\{K_{n}\}$ be an $L^{1}$-matrix convex set in a $*$-locally convex space $V.$ Then $\overline{A_{0}(K_{n}, M_{n}(V))_{sa}}= A_{0}(K_{n})_{sa}$.
\end{proposition}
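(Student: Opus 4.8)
Throughout, $A_{0}(K_{n})_{sa}$ denotes the space of all real-valued continuous affine functions on $K_{n}$ vanishing at $0$, equipped with the uniform norm $\Vert f\Vert = \sup\{|f(v)| : v\in K_{n}\}$; by Lemma~\ref{a4} this norm restricts to $\Vert\cdot\Vert_{\infty,n}$ on $A_{0}(K_{n}, M_{n}(V))_{sa}$, so the closure in the statement is taken in this norm. The plan is to prove the two inclusions separately. One direction is immediate: a uniform limit on $K_{n}$ of continuous affine functions vanishing at $0$ is again continuous, affine, and vanishing at $0$, so $A_{0}(K_{n})_{sa}$ is a Banach space and $\overline{A_{0}(K_{n}, M_{n}(V))_{sa}} \subseteq A_{0}(K_{n})_{sa}$. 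The content is the reverse inclusion, namely that every $f\in A_{0}(K_{n})_{sa}$ is a uniform limit of restrictions to $K_{n}$ of self-adjoint continuous linear functionals on $M_{n}(V)$.

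To prove this density, fix $f\in A_{0}(K_{n})_{sa}$ and $\epsilon>0$, and work inside the real locally convex Hausdorff space $E := M_{n}(V)_{sa}\times\R$ (recall that $M_{n}(V)_{sa}$ is a closed real subspace of $M_{n}(V)$, since $\ast$ is a homeomorphism). The graph $G := \{(v, f(v)) : v\in K_{n}\}$ is compact, being the continuous image of $K_{n}$, and convex, since $f$ is affine; hence its translates $G-(0,\epsilon)$ and $G+(0,\epsilon)$ are disjoint compact convex subsets of $E$. By the Hahn--Banach separation theorem there are a continuous linear functional $\Lambda(v,t)=\psi(v)+ct$ on $E$ and reals $\alpha<\beta$ with $\Lambda\le\alpha$ on $G-(0,\epsilon)$ and $\Lambda\ge\beta$ on $G+(0,\epsilon)$; subtracting these inequalities at a common $v\in K_{n}$ gives $2c\epsilon\ge\beta-\alpha>0$, so $c>0$. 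Dividing by $c$, we obtain a continuous linear functional $\psi_{0}:=c^{-1}\psi$ on $M_{n}(V)_{sa}$ and a $\gamma\in\R$ for which the affine function $g:=\gamma-\psi_{0}|_{K_{n}}$ satisfies $\sup\{|f(v)-g(v)| : v\in K_{n}\}<\epsilon$. (This is the classical fact that $A(K_{n})$ is the uniform closure of the restrictions of continuous linear functionals together with the constants; cf.\ \cite{ALF71}.)

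Now remove the constant and promote $\psi_{0}$ to all of $M_{n}(V)$. Since $f(0)=0=\psi_{0}(0)$, evaluating the last estimate at $v=0$ gives $|\gamma|<\epsilon$, so $h := g-\gamma = -\psi_{0}|_{K_{n}}$ lies in $A_{0}(K_{n})_{sa}$, vanishes at $0$, and satisfies $\sup\{|f(v)-h(v)| : v\in K_{n}\}<2\epsilon$. Because $\ast$ is a homeomorphism of $M_{n}(V)$, the maps $v\mapsto\tfrac12(v+v^{\ast})$ and $v\mapsto\tfrac1{2i}(v-v^{\ast})$ are continuous and exhibit $M_{n}(V)=M_{n}(V)_{sa}\oplus iM_{n}(V)_{sa}$ as a topological direct sum; hence $-\psi_{0}$ extends to the continuous linear functional $\tilde{h}$ on $M_{n}(V)$ given by $\tilde{h}(v)=-\psi_{0}\!\big(\tfrac12(v+v^{\ast})\big)-i\,\psi_{0}\!\big(\tfrac1{2i}(v-v^{\ast})\big)$, which satisfies $\tilde{h}(v^{\ast})=\overline{\tilde{h}(v)}$. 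Thus $h\in A_{0}(K_{n}, M_{n}(V))_{sa}$ and, by Lemma~\ref{a4}, $\Vert f-h\Vert_{\infty,n}<2\epsilon$. Since $\epsilon>0$ was arbitrary, $f\in\overline{A_{0}(K_{n}, M_{n}(V))_{sa}}$, which proves the proposition.

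The only delicate point is the bookkeeping in the middle step: one must check that the separating functional on $M_{n}(V)_{sa}\times\R$ genuinely has a strictly positive vertical component $c$ (otherwise $g$ need not be a graph over $K_{n}$ and the approximation fails to be two-sided), and that subtracting off the constant $\gamma$ --- which is small precisely because $f(0)=0$ --- leaves the approximant within tolerance in the norm $\Vert\cdot\Vert_{\infty,n}$; here Lemma~\ref{a4}, identifying $\Vert\cdot\Vert_{\infty,n}$ with the uniform norm on $K_{n}$ for self-adjoint functions, is exactly what is needed.
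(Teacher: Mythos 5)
Your proof is correct and follows essentially the same route as the paper: Hahn--Banach separation of the (compact convex) graph of $f$ over $K_{n}$ from a vertical translate in $M_{n}(V)_{sa}\times\R$, extraction of a strictly positive vertical component, and normalization to get a linear approximant. The only differences are cosmetic --- you translate symmetrically and then strip off a small constant (losing a harmless factor of $2$ in $\epsilon$), whereas the paper evaluates the one-sided separation at $u=v=0$ to get $\lambda>0$ and the two-sided bound directly; you are also more explicit than the paper about extending the real functional on $M_{n}(V)_{sa}$ to a self-adjoint continuous linear functional on $M_{n}(V)$.
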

\begin{proof} 
	By the definition, $A_{0}(K_{n}, M_{n}(V))_{sa} \subset A_{0}(K_{n})_{sa}$. Also, since $A_{0}(K_{n})_{sa}$ is norm complete, we get $\overline{A_{0}(K_{n}, M_{n}(V))_{sa}} \subset A_{0}(K_{n})_{sa}$. Conversely, let $a \in A_{0}(K_{n})_{sa}$ and $\epsilon >0$. Then $G_{K_{n}}(a)$ and $G_{K_{n}}(a+\epsilon)$ are compact convex set in $M_{n}(V)_{sa} \times \R$. Here 
	$$G_{K_n}(b + \lambda) := \{ (k, b(k) + \lambda ): k \in K_n \}$$ 
	for $b \in A_{0}(K_{n})_{sa}$ and $\lambda \in [0, \infty )$. 
	Thus $G_{K_n}(a) \cap G_{K_n}(a + \epsilon) = \emptyset$. Therefore, by the Hahn Banach separation theorem, there are $f\in (M_{n}(V)_{sa})^* (= (M_n(V)^*)_{sa})$ and $\lambda \in \R$ such that
	$$(f, \lambda)(u, a(u))< (f,\lambda)(v, a(v)+\epsilon) ~\forall u,v\in K_{n}.$$ 
	Simplifying this, we get
	$$f(u)+ \lambda a(u)< f(v)+ \lambda (a(v)+\epsilon) ~\forall u,v\in K_{n}.$$ 
	In particular, when $u= v= 0$, we get $\lambda > 0$. Similarly, for $u=0$ and $v=0$ separately, we have
	$$\lambda^{-1} f(u)+ a(u)< \epsilon \textrm{ and } \lambda^{-1}  f(v)+ a(v) >-\epsilon ~\forall u,v\in K_{n}. $$
	Let us put $a_{1}= - \lambda^{-1} f$, then $a_{1}\in A_{0}(K_{n}, M_{n}(V))_{sa}$ and $\vert a_{1}(u)- a(u)\vert<\epsilon $ for all $u\in K_{n}.$ Thus by Lemma \ref{a4}, we have $\Vert a_{1}- a\Vert_{\infty, n} \leq \epsilon$. This completes the proof.
\end{proof}
\begin{proposition} Under the assumptions of Theorem \ref{mou}, $A_{0}(K_{n}, M_{n}(V))=A_{0}(K_{n}).$
\end{proposition}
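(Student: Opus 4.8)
The plan is to combine the density statement of Proposition \ref{c1} with a completeness argument. Since $K_n$ is compact, $A_{0}(K_n)_{sa}$ is a closed subspace of $C_{\R}(K_n)$, hence a Banach space for the uniform norm $r_n(a)=\sup\{|a(v)|:v\in K_n\}$; by Lemma \ref{a4} this norm agrees with $\Vert\cdot\Vert_{\infty,n}$ on the subspace $A_{0}(K_n,M_n(V))_{sa}$, and by Proposition \ref{c1} that subspace is $r_n$-dense in $A_{0}(K_n)_{sa}$. Consequently it suffices to prove that $\left(A_{0}(K_n,M_n(V))_{sa},\Vert\cdot\Vert_{\infty,n}\right)$ is complete: then it is $r_n$-closed in $A_{0}(K_n)_{sa}$ and therefore equal to it, and the complex statement $A_{0}(K_n,M_n(V))=A_{0}(K_n)$ follows by splitting an arbitrary $a\in A_{0}(K_n)$ into $\frac{a+a^*}{2}$ and $\frac{a-a^*}{2i}$, both of which lie in $A_{0}(K_n)_{sa}$.

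I would establish completeness first at level $1$, by showing outright that $A_{0}(K_1,V)_{sa}=A_{0}(K_1)_{sa}$. Let $a\in A_{0}(K_1)_{sa}$. Since $L_1$ is compact and convex and $L_1\subseteq K_1$, the restriction $a|_{L_1}$ belongs to $A(L_1)_{sa}$, so evaluation at $a|_{L_1}$ is a $w^*$-continuous functional on $(A(L_1)_{sa}^*)_{w^*}$. Put $\widetilde{a}:=\mathrm{ev}_{a|_{L_1}}\circ\chi:V_{sa}\to\R$. This is linear, and it is continuous because $\chi$ is a homeomorphism (the regular-embedding hypothesis) and $\mathrm{ev}_{a|_{L_1}}$ is $w^*$-continuous. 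Unwinding the definition of $\chi$: for $k\in K_1$ with $k=\alpha\widehat{k}$, $\widehat{k}\in L_1$, $\alpha\in(0,1]$ (Proposition \ref{c4}), one gets $\widetilde{a}(k)=\alpha\,a(\widehat{k})=a(k)$ by affinity of $a$ and $a(0)=0$; thus $\widetilde{a}$ extends $a$. Complexifying $\widetilde{a}$ to a continuous self-adjoint linear functional on $V$ yields a continuous linear extension of $a$, so $a\in A_{0}(K_1,V)_{sa}$. Hence $A_{0}(K_1,V)_{sa}=A_{0}(K_1)_{sa}$ is complete, and so is $A_{0}(K_1,V)=A_{0}(K_1,V)_{sa}+iA_{0}(K_1,V)_{sa}$.

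To pass to level $n$, I would use the $*$-isomorphism $\Phi_n$ to identify $\left(A_{0}(K_n,M_n(V))_{sa},\Vert\cdot\Vert_{\infty,n}\right)$ with $\left(M_n(A_{0}(K_1,V))_{sa},\Vert\cdot\Vert_n\right)$. Since $A_{0}(K_1,V)$ is an abstract operator space (Theorem \ref{a9}), condition $(3)$ of the matrix norm, applied to compressions and expansions by the matrix units, gives $\Vert x_{ij}\Vert_{\infty,1}\le\Vert[x_{kl}]\Vert_n\le\sum_{i,j}\Vert x_{ij}\Vert_{\infty,1}$, so $\Vert\cdot\Vert_n$ is equivalent to the product norm on $A_{0}(K_1,V)^{n^2}$; as $A_{0}(K_1,V)$ is complete, $M_n(A_{0}(K_1,V))$ and its self-adjoint part are complete. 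This delivers the completeness of $A_{0}(K_n,M_n(V))_{sa}$ needed in the first paragraph, and the proof is finished.

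The one genuinely delicate step is the level-$1$ identification: the regular-embedding hypothesis is imposed only on $L_1\subseteq V_{sa}$, so continuity of the extension $\widetilde{a}$ has to be obtained through the homeomorphism $\chi$ together with the characterization of $w^*$-continuous functionals on $(A(L_1)_{sa}^*)_{w^*}$, and one cannot simply invoke a level-$n$ analogue of regular embedding. Everything afterwards --- the density from Proposition \ref{c1}, the level-wise equivalence of the matrix norms with product norms, and the passage back to complex-valued functions --- is routine.
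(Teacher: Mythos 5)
Your proposal is correct and follows essentially the same route as the paper: establish $A_{0}(K_{1},V)=A_{0}(K_{1})$ at level one via the regular-embedding homeomorphism $\chi$ (your composition with the evaluation functional is just a repackaging of the paper's net argument for continuity of the extension), transfer completeness to $A_{0}(K_{n},M_{n}(V))$, and combine with the density statement of Proposition \ref{c1}. The only differences are expository — you spell out the matrix-norm/product-norm equivalence that the paper leaves implicit when passing from level $1$ to level $n$.
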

\begin{proof}
	We know that $A_{0}(K_{1}, V)\subseteq A_{0}(K_{1}).$ Let $a\in A_{0}(K_{1})$ so that $a=a_{1}+ia_{2}$ for some $a_{1}, a_{2}\in A_{0}(K_{1})_{sa}$ and let $\{\lambda_{\alpha} u_{\alpha}- \mu_{\alpha}v_{\alpha}\}$ be
	a net in $V_{sa}$ for some $u_{\alpha}, v_{\alpha}\in L_{1}$ and $\lambda_{\alpha}, \mu_{\alpha}\geq
	0$ such that $\lambda_{\alpha} u_{\alpha}- \mu_{\alpha}v_{\alpha}\to 0$. Since $K_{1}$ spans $V,$ $a_i$ has a unique linear extension $\widetilde{a_{i}}$ for $i=1,2.$ Since $\{ K_{n} \}$ is
	$L^1$-regularly embedded in $V,$ $\chi(\lambda_{\alpha} u_{\alpha}-\mu_{\alpha}v_{\alpha})\to 0$ in $(A(L_{1})_{sa}^{*})_{w*}.$ Thus 
	\begin{align*}
	\widetilde{a_{i}}(\lambda_{\alpha}u_{\alpha}-\mu_{\alpha}v_{\alpha})
	&=\lambda_{\alpha} a_{i}(u_{\alpha})-\mu_{\alpha}a_{i}(v_{\alpha})\\
	&=\lambda_{\alpha} a_{i}|_{L_{1}}(u_{\alpha})-\mu_{\alpha}a_{i}|_{L_{1}}(v_{\alpha})\\
	&=\chi(\lambda_{\alpha} u_{\alpha}-\mu_{\alpha}v_{\alpha})(a_{i}|_{L_{1}})\to 0
	\end{align*}
	Let $\widetilde{a}=\widetilde{a_{1}}+ i\widetilde{a_{2}}.$ Then $\widetilde{a}|_{K_{1}}=a$ and
	$\widetilde{a}(\lambda_{\alpha}u_{\alpha}-\mu_{\alpha}v_{\alpha})\to 0.$ Thus $\widetilde{a}$ is
	continuous on $V$ and consequently, $a\in A_{0}(K_{1}, V).$ Therefore we have $A_{0}(K_{1})=A_{0}(K_{1}, V).$
	It follows that $A_{0}(K_{1}, V)$ is $\Vert\cdot\Vert_{1}$-complete so that $(A_{0}(K_{n},M_{n}(V))$ is
	$\Vert\cdot\Vert_{\infty,n}$-complete. Since $\overline{A_{0}(K_{n}, M_{n}(V))_{sa}}=A_{0}(K_{n})_{sa}$ by Proposition \ref{c1}, we may conclude that 
	$$A_{0}(K_{n}) = \overline{A_{0}(K_{n}, M_{n}(V))} = A_{0}(K_{n}, M_{n}(V))$$ 
	for $A_{0}(K_{n}, M_{n}(V))$ is $\Vert\cdot\Vert_{\infty, n}$-complete.
\end{proof}

\begin{remark}
	Under the assumptions of Theorem \ref{mou}, $L_{n}$ is compact for each $n\in \N.$ To see this, 
	let $\{u_{\alpha}\}$ be a net in $L_{n}.$ Since $L_{n}\subseteq K_{n}$ and $K_{n}$ is compact,
	$u_{\alpha}$ has subnet $\{u_{\beta}\}$ that convergent  $u_{0}\in K_{n}$. Since $e_{n}\in
	A_{0}(K_{n}).$ Therefore $1= e_{n}(u_{\beta}) \to e_{n}(u_{0})$ so that $e_{n}(u_{0})=1.$ Hence
	$u_{0}\in
	L_{n}.$ 
\end{remark}
\begin{proposition}\label{iso}
	$A_{0}(K_{n})$ is order isomorphic to $A(L_{n})$.% via $a\mapsto a|_{L_n}$, $a\in A_0(K_{n}).$ 
\end{proposition}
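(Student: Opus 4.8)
The plan is to exhibit the restriction map $R\colon A_{0}(K_{n})\to A(L_{n})$, $R(a)=a|_{L_{n}}$, as a bijective, bipositive linear map. Well-definedness and linearity are immediate: $L_{n}$ is a compact convex subset of $K_{n}$ (compactness and convexity of $L_{n}$ having been established above under the standing hypotheses) and the restriction of a continuous affine function vanishing at $0$ is again continuous and affine. For injectivity, suppose $a\in A_{0}(K_{n})$ vanishes on $L_{n}$; given $k\in K_{n}\setminus\{0\}$, Proposition~\ref{c4} gives a unique $\alpha\in(0,1]$ and $\widehat{k}\in L_{n}$ with $k=\alpha\widehat{k}$, whence $a(k)=a\big(\alpha\widehat{k}+(1-\alpha)0\big)=\alpha a(\widehat{k})=0$ since $a$ is affine with $a(0)=0$. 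Thus $a\equiv 0$, and $R$ is injective.

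The main content is surjectivity. Given $b\in A(L_{n})$ (a continuous affine function, $L_{n}$ being compact), I would define $\Psi(b)\colon K_{n}\to\C$ by $\Psi(b)(0)=0$ and $\Psi(b)(k)=\alpha\,b(\widehat{k})$ whenever $k=\alpha\widehat{k}$ with $\alpha\in(0,1]$ and $\widehat{k}\in L_{n}$, which is unambiguous by the uniqueness in Proposition~\ref{c4}. The first step is to verify that $\Psi(b)$ is affine: for nonzero $k=\alpha\widehat{k}$, $k'=\alpha'\widehat{k}'$ in $K_{n}$ (with $\widehat{k},\widehat{k}'\in L_{n}$) and $\lambda\in[0,1]$, set $\gamma=\lambda\alpha+(1-\lambda)\alpha'\in(0,1]$ and $w=\gamma^{-1}\big(\lambda\alpha\widehat{k}+(1-\lambda)\alpha'\widehat{k}'\big)$; convexity of $L_{n}$ forces $w\in L_{n}$, so $\gamma w$ is the canonical decomposition of $\lambda k+(1-\lambda)k'$ supplied by Proposition~\ref{c4}, and affineness of $b$ then gives $\Psi(b)\big(\lambda k+(1-\lambda)k'\big)=\gamma b(w)=\lambda\Psi(b)(k)+(1-\lambda)\Psi(b)(k')$ (the cases where $k$ or $k'$ is $0$ being immediate). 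The second step is continuity of $\Psi(b)$ on $K_{n}$: writing $e^{K}_{n}$ for the map $k\mapsto\alpha$ on $K_{n}$, which equals $\widetilde{e_{n}}|_{K_{n}}$ and is continuous by the construction in the proof of Theorem~\ref{mou}, we have $\Psi(b)(k)=e^{K}_{n}(k)\,b\big(e^{K}_{n}(k)^{-1}k\big)$ on $K_{n}\setminus\{0\}$, which is continuous since $k\mapsto e^{K}_{n}(k)^{-1}k$ maps $K_{n}\setminus\{0\}$ continuously into $L_{n}$ and $b$ is continuous on $L_{n}$; at $0$ the estimate $|\Psi(b)(k)|\le e^{K}_{n}(k)\,\sup_{L_{n}}|b|\to 0$ as $k\to 0$ (valid because $L_{n}$ is compact, so $b$ is bounded on it, and $e^{K}_{n}(0)=0$) yields continuity. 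Hence $\Psi(b)\in A_{0}(K_{n})$ and $R(\Psi(b))=b$.

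For the order structure, if $a\in A_{0}(K_{n})$ is nonnegative on $K_{n}$ then it is nonnegative on $L_{n}\subseteq K_{n}$, so $R(a)$ is nonnegative on $L_{n}$; conversely, if $R(a)$ is nonnegative on $L_{n}$, then for $k=\alpha\widehat{k}\in K_{n}$ we get $a(k)=\alpha a(\widehat{k})=\alpha\,R(a)(\widehat{k})\ge 0$, so $a$ is nonnegative on $K_{n}$. Hence $R$ carries the positive cone of $A_{0}(K_{n})$ onto that of $A(L_{n})$, and $R$ is an order isomorphism. I expect the crux to be the surjectivity argument, and within it the simultaneous verification of affineness and continuity of $\Psi(b)$ on all of $K_{n}$, which is exactly where compactness and convexity of $L_{n}$ and the continuity of $\widetilde{e_{n}}$ are indispensable; passage to complex-valued $b$ is routine by splitting $b=b_{1}+ib_{2}$ with $b_{1},b_{2}\in A(L_{n})_{sa}$. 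Finally, since $R$ sends the order unit $e_{n}$ of $A_{0}(K_{n})_{sa}$ to the order unit $e_{n}|_{L_{n}}$ of $A(L_{n})_{sa}$, this order isomorphism is automatically isometric for the respective order-unit norms.
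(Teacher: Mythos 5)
Your proposal is correct and follows essentially the same route as the paper: both reduce the statement to surjectivity of the restriction map $a\mapsto a|_{L_{n}}$, produce the extension $k=\alpha\widehat{k}\mapsto\alpha\,b(\widehat{k})$ (the paper asserts its existence from convexity of $L_{n}$, you write it out and check affineness), and prove continuity using the continuity of $e_{n}$ together with the convergence $\widehat{u_{\alpha}}\to\widehat{u_{0}}$ away from $0$ and boundedness of $b$ on the compact set $L_{n}$ near $0$. Your explicit treatment of injectivity and bipositivity fills in steps the paper leaves implicit, but introduces no new idea.
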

\begin{proof} 
	It suffices to prove that the map $a\mapsto a|_{L_n}$ from $A_0(K_n)$ into $A(L_n)$ is surjective. Let $a\in A(L_{n}).$ Since $L_{n}$ is convex, there is an affine map $b$ on $K_{n}$ such that $b|_{L_{n}}=a$ and $b(0)=0.$ Now let $u_{\alpha}$ be net in $K_{n}$ such that $u_{\alpha}\to u_{0}$ in $K_{n}$. Since $e_{n}\in A_{0}(K_{n}),$ $e_{n}(u_{\alpha})\to e_{n}(u_{0}).$ By Proposition \ref{c4}, we have $u_{\alpha} = \lambda_{\alpha} \widehat{u_{\alpha}}$ for some $\widehat{u_{\alpha}} \in L_{n}$ and $\lambda_{\alpha} \in [0,1].$ If $u_{0}=0,$ then $\lambda_{\alpha} = \lambda_{\alpha} e_{n}(\widehat{u_{\alpha}}) = e_{n}(u_{\alpha}) \to e(0) = 0.$ Therefore, $b(u_{\alpha}) = \lambda_{\alpha} a(\widehat{u_{\alpha}}) \to 0=b(0).$ Again if $u_{0}\neq 0,$ then by Proposition \ref{c4}, we have $u_{0}=\lambda_{0}\widehat{u_{0}}$ for some $\lambda_{0}\in (0,1]$ and $\widehat{u_{0}} \in L_{n}.$ Then $\lambda_{\alpha} = \lambda_{\alpha} e_{n}(\widehat{u_{\alpha}}) = e_{n}(u_{\alpha}) \to e_{n}(u_{0}) = \lambda_{0}.$ Thus we have $\widehat{u_{\alpha}} \to \widehat{u_{0}}.$ Since $b(u_{\alpha}) = \lambda_{\alpha} a(\widehat{u_{\alpha}}),$ we have $b(u_{\alpha}) \to \lambda_{0} a(u_{0}) = b(u_{0}).$ 
\end{proof}

\begin{remark} In Proposition \ref{iso}, we note that $a \mapsto a|_{L}$ is an isometry from $A_{0}(K_{n})_{sa}$ onto $A(L_{n})$ as well. Hence  $(A_{0}(K_{1}), e)$ is unitally, complete isometrically, completely order isomorphic to $(A(L_{1}), e)$ as matrix order unit spaces.
\end{remark}

%\begin{acknowledgement} 
%	The first author is thankful to the Department of Atomic Energy, Government of India for providing financial support.
%\end{acknowledgement}

\end{document}